\newtheorem{theorem}{Theorem}
\newtheorem{lemma}[theorem]{Lemma}
\newtheorem*{lemma*}{Lemma}
\newtheorem{corollary}[theorem]{Corollary}
\newtheorem*{maintheorem*}{Main Theorem}
\newtheorem*{corollary*}{Corollary}
\theoremstyle{definition}
\newtheorem{example}[theorem]{Example}
\newtheorem*{example*}{Example}
\theoremstyle{remark}
\newtheorem{remark}[theorem]{Remark}
\sloppy\pagestyle{plain}
\makeatletter\@addtoreset{equation}{section} \makeatother
\newcommand{\mumu}{\boldsymbol{\mu}}
\author{Ivan Cheltsov and Piotr Pokora}
\title{On K-stability of $\mathbb{P}^3$ blown up along a~quintic elliptic curve}
\thanks{Throughout this paper, all varieties are assumed to be projective and defined over~$\mathbb{C}$.}
\begin{document}

\begin{abstract}
In this note, we study K-stability of smooth Fano threefolds that can be obtained by blowing up the~three-dimensional projective space
along a~smooth elliptic curve of degree five.
\end{abstract}

\subjclass[2010]{14J45.}

\address{ \emph{Ivan Cheltsov}\newline
\textnormal{University of Edinburgh, Edinburgh, Scotland
\newline
\texttt{i.cheltsov@ed.ac.uk}}}

\address{ \emph{Piotr Pokora}\newline \textnormal{Pedagogical University of Krakow, Krak\'ow, Poland
\newline
\texttt{piotrpkr@gmail.com}}}

\maketitle

Let $C_5$ be be a~smooth quintic elliptic curve in $\mathbb{P}^3$,
and let $\pi\colon X\to\mathbb{P}^3$ be the~blow up of this curve.
Then $X$ is a~smooth Fano threefold in the~family \textnumero 2.17,
and every smooth Fano 3-fold in this family can be obtained by
blowing up $\mathbb{P}^3$ along a~suitable smooth quintic elliptic curve.

It is well known that there exists the~following Sarkisov link:
\begin{equation}
\label{equation:diagram}\tag{$\bigstar$}
\xymatrix{
&X\ar@{->}[dl]_{\pi}\ar@{->}[dr]^{q}\\%
\mathbb{P}^3 && Q}
\end{equation}
where $Q$ is a~smooth quadric threefold in $\mathbb{P}^4$, and $q$ is a~blow up of a~smooth quintic elliptic curve~$C_5^\prime$.
Let $E_{\mathbb{P}^3}$ and $E_Q$ be the~exceptional divisors of $\pi$ and $q$, respectively.
If $\ell$ is a~general fiber of~the~natural projection $E_{\mathbb{P}^3}\to C_5$, then $\pi(\ell)$ is a~trisecant of the~quintic elliptic curve $C_5$.
Similarly, if $\ell^\prime$ is a~general fiber of the~projection~$E_Q\to C_5^\prime$, then $q(\ell^\prime)$ is a~secant of the~curve $C_5^\prime$
contained in $Q$.

\begin{example}
\label{example:book}
Let $\mathcal{E}$ be the~harmonic elliptic curve,
and let $\theta$ be an~element in  $\mathrm{Aut}(\mathcal{E})$ of order $4$ that fixes a~point $P\in C_5$.
Then it follows from \cite{Hulek} that
$$
\mathrm{Aut}\big(\mathcal{E},[5P]\big)\cong\mumu_5^2\rtimes\mumu_4,
$$
and there exists an~$\mathrm{Aut}(\mathcal{E},[5P])$-equivariant embedding $\phi\colon \mathcal{E}\hookrightarrow\mathbb{P}^4$
such that $\phi(\mathcal{E})$ is a~smooth quintic elliptic curve.
Let $G$ be a~subgroup in $\mathrm{Aut}(\mathcal{E},[5P])$ such that $G\cong\mumu_5\rtimes\mumu_4$.
Then $G$ fixes a~unique point in $\mathbb{P}^4$ that is not contains in the~hypersurface spanned by the~secants of the~quintic curve $\phi(\mathcal{E})$.
Let $\psi\colon \mathbb{P}^4\dasharrow\mathbb{P}^3$ be the~projection from this point.
Then $\psi\circ\phi(\mathcal{E})$ is a~smooth quintic elliptic curve.
Let $C_5=\psi\circ\phi(\mathcal{E})$. Then $\mathrm{Aut}(X)\cong\mumu_5\rtimes\mumu_4$,
and $X$ is K-stable \cite[Section~5.7]{Book}
\end{example}

Since being K-stable is an open condition, a~general member of the~family \textnumero 2.17 is K-stable.
In~fact, all smooth Fano threefolds in the~deformation family \textnumero 2.17  are expected to be K-stable \cite{Book}.
To show this it is enough to prove that $\beta(\mathbf{F})=A_X(\mathbf{F})-S_X(\mathbf{F})>0$
for every prime divisor $\mathbf{F}$ over the~Fano threefold $X$ \cite{Fujita2019,Li}, where $A_X(\mathbf{F})$ is the~log discrepancy of the~divisor $\mathbf{F}$, and
$$
S_X\big(\mathbf{F}\big)=\frac{1}{(-K_X)^3}\int\limits_0^{\infty}\mathrm{vol}\big(-K_X-u\mathbf{F}\big)du.
$$
Unfortunately, we are unable to prove this. Instead, we prove the~following weaker result:

\begin{maintheorem*}
Let $\mathbf{F}$~be~a~prime divisor over $X$ such that $\beta(\mathbf{F})\leqslant 0$, let $Z$ be its center on $X$.
Then $Z$ is a~point in $E_{\mathbb{P}^3}\cap E_Q$.
\end{maintheorem*}

By \cite[Corollary~4.14]{Zhuang}, the Main Theorem implies the~following corollary.

\begin{corollary}
\label{corollary:1}
Suppose that $\mathrm{Aut}(\mathbb{P}^3,C_5)$ does not fix a~point in $C_5$. Then $X$ is K-stable.
\end{corollary}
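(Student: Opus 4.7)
The plan is to combine the Main Theorem with the equivariant K-stability criterion of Zhuang. Let $G=\mathrm{Aut}(\mathbb{P}^3,C_5)$. Then $G$ lifts to a biregular action on $X$, and since the Sarkisov link (\ref{equation:diagram}) is canonically determined by $X$ --- the contraction $q$ is the unique $K_X$-negative extremal contraction other than $\pi$ --- the action descends to $Q$ preserving $C_5^\prime$, so both $E_{\mathbb{P}^3}$ and $E_Q$ are $G$-invariant. Moreover $G$ is finite, and hence reductive: any algebraic subgroup of $\mathrm{PGL}_4(\mathbb{C})$ preserving a non-degenerate smooth elliptic curve is finite, because its image in $\mathrm{Aut}(C_5)$ lies in an extension of a finite group by an abelian variety, while its kernel would fix $C_5$ pointwise and so is trivial.

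Next, I would invoke \cite[Corollary 4.14]{Zhuang}: to establish K-stability of $X$, it suffices to verify that $\beta(\mathbf{F})>0$ for every $G$-invariant prime divisor $\mathbf{F}$ over $X$. Arguing by contradiction, suppose some $G$-invariant $\mathbf{F}$ satisfies $\beta(\mathbf{F})\leqslant 0$. The Main Theorem then forces the center $Z$ of $\mathbf{F}$ on $X$ to be a closed point of $E_{\mathbb{P}^3}\cap E_Q$. Since $\mathbf{F}$ is $G$-invariant, the associated valuation on $\mathbb{C}(X)$ is $G$-invariant, and therefore so is its center $Z$; being a single point, $Z$ is in fact $G$-fixed. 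Applying the $G$-equivariant morphism $\pi$ produces a $G$-fixed point $\pi(Z)\in C_5$, contradicting the standing hypothesis that $\mathrm{Aut}(\mathbb{P}^3,C_5)$ fixes no point of $C_5$.

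The argument is entirely formal once the Main Theorem is granted. The only points requiring any care are the $G$-equivariance of the Sarkisov link (which is automatic from the uniqueness of $q$) and the reductivity hypothesis in Zhuang's criterion (which is secured by the finiteness of $G$ noted above). Consequently I do not anticipate any real obstacle in this deduction; the entire difficulty of the paper lies in the Main Theorem itself, not in its passage to Corollary \ref{corollary:1}.
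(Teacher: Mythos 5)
Your argument is correct and is exactly the deduction the paper intends: the paper's entire ``proof'' is the one-line citation of \cite[Corollary~4.14]{Zhuang}, and your write-up simply makes explicit the standard steps (finiteness of $G=\mathrm{Aut}(\mathbb{P}^3,C_5)$, $G$-invariance of the center of a $G$-invariant destabilizing divisor, and $\pi(Z)\in C_5$ being $G$-fixed). No gaps; this matches the paper's approach.
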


Observe that $\mathrm{Aut}(X)\cong\mathrm{Aut}(\mathbb{P}^3,C_5)$,
and all possibilities for the~group $\mathrm{Aut}(\mathbb{P}^3,C_5)$ can be easily derived from \cite{Hulek}.
Namely, if $C_5$ is general, then $\mathrm{Aut}(\mathbb{P}^3,C_5)$ is trivial, so Corollary~\ref{corollary:1} is not applicable.
If $\mathrm{Aut}(\mathbb{P}^3,C_5)$ is not trivial, then it must be isomorphic to one of the~following finite groups:
\begin{center}
$\mumu_5\rtimes\mumu_4$, $\mumu_5\rtimes\mumu_2$, $\mumu_6$, $\mumu_5$, $\mumu_4$, $\mumu_2$.
\end{center}
Furthermore, if $\mathrm{Aut}(\mathbb{P}^3,C_5)$  contains a~subgroup isomorphic to $\mumu_5$, it acts on $C_5$ by translations.
This implies that $\mathrm{Aut}(\mathbb{P}^3,C_5)$ does not fix a~point in $C_5$ $\iff$ $\mathrm{Aut}(\mathbb{P}^3,C_5)$  contains a~subgroup isomorphic to $\mumu_5$.
Therefore, Corollary~\ref{corollary:1} gives the~following generalization of Example~\ref{example:book}.

\begin{corollary}
\label{corollary:2}
Suppose that $\mathrm{Aut}(X)$ contains a~subgroup isomorphic to $\mumu_5$. Then $X$ is K-stable.
\end{corollary}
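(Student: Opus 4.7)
The plan is to obtain Corollary~\ref{corollary:2} as an immediate consequence of Corollary~\ref{corollary:1} combined with the group-theoretic remarks already assembled in the paragraph preceding its statement. First I would use the identification $\mathrm{Aut}(X)\cong\mathrm{Aut}(\mathbb{P}^3,C_5)$ to rewrite the hypothesis as the existence of a subgroup $\mumu_5\subseteq\mathrm{Aut}(\mathbb{P}^3,C_5)$. Since the quintic elliptic curve $C_5\subset\mathbb{P}^3$ is linearly nondegenerate (it spans $\mathbb{P}^3$), the restriction map $\mathrm{Aut}(\mathbb{P}^3,C_5)\hookrightarrow\mathrm{Aut}(C_5)$ is injective, so this $\mumu_5$ embeds into $\mathrm{Aut}(C_5)$.

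Next I would invoke the standard structure of the automorphism group of an elliptic curve: for any chosen point $P\in C_5$, there is an exact sequence
\[
1\longrightarrow C_5\longrightarrow\mathrm{Aut}(C_5)\longrightarrow\mathrm{Aut}(C_5,P)\longrightarrow 1,
\]
in which $\mathrm{Aut}(C_5,P)$ is cyclic of order $2$, $4$, or $6$ depending on the $j$-invariant. Because $5$ is coprime to each of $2$, $4$, $6$, the composition $\mumu_5\hookrightarrow\mathrm{Aut}(C_5)\twoheadrightarrow\mathrm{Aut}(C_5,P)$ is trivial, so the $\mumu_5$-action factors through translations, recovering the assertion quoted in the excerpt. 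A nontrivial translation by a $5$-torsion element acts without fixed points on $C_5$, so the ambient group $\mathrm{Aut}(\mathbb{P}^3,C_5)$ fixes no point of $C_5$.

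At this stage the hypothesis of Corollary~\ref{corollary:1} is satisfied, and one concludes that $X$ is K-stable. There is no real obstacle here: the nontrivial inputs (the Main Theorem and its consequence Corollary~\ref{corollary:1}) are already in hand, and the only additional content is the elementary observation that a group of order $5$ can act on an elliptic curve only by translations. If anything, one should be slightly careful about the faithfulness of $\mathrm{Aut}(\mathbb{P}^3,C_5)\to\mathrm{Aut}(C_5)$, but this is immediate from the nondegeneracy of $C_5$ in $\mathbb{P}^3$.
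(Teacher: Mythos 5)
Your proposal is correct and follows essentially the same route as the paper: identify $\mathrm{Aut}(X)$ with $\mathrm{Aut}(\mathbb{P}^3,C_5)$, observe that any subgroup isomorphic to $\mumu_5$ must act on $C_5$ by translations and hence without fixed points, and then apply Corollary~\ref{corollary:1}. The only difference is cosmetic: the paper attributes the translation claim to the classification in \cite{Hulek}, whereas you rederive it from the exact sequence for $\mathrm{Aut}(C_5)$ and the coprimality of $5$ with $2$, $4$, $6$, which is a perfectly adequate substitute.
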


\begin{example}[{\cite{Hulek}}]
\label{example:1}
Fix $a\in\mathbb{C}$ such that $a\ne 0$ and $a^{10}+11a^5-1\ne 0$.
Let $C_5^\prime$ be the quintic elliptic curve in $\mathbb{P}^4$ given by the following system of equations:
$$
\left\{\aligned
&x_0^2+ax_2x_3-\frac{x_1x_4}{a}=0,\\
&x_1^2+ax_3x_4-\frac{x_2x_0}{a}=0,\\
&x_2^2+ax_4x_0-\frac{x_3x_1}{a}=0,\\
&x_3^2+ax_0x_1-\frac{x_4x_2}{a}=0,\\
&x_4^2+ax_1x_2-\frac{x_0x_3}{a}=0,
\endaligned
\right.
$$
where~$[x_0:x_1:x_2:x_3:x_4]$ are coordinates on~$\mathbb{P}^4$.
Let $\sigma$, $\tau$, $\iota$ be the~automorphisms of $\mathbb{P}^4$ given by
\begin{align*}
\sigma\big([x_0:x_1:x_2:x_3:x_4]\big)&=[x_1:x_2:x_3:x_4:x_0],\\
\tau\big([x_0:x_1:x_2:x_3:x_4]\big)&=[x_0:\omega_5x_1:\omega_5^2x_2:\omega_5^3x_3:\omega_5^4x_4],\\
\iota\big([x_0:x_1:x_2:x_3:x_4]\big)&=[x_0:x_4:x_3:x_2:x_1],
\end{align*}
where $\omega_5$ is a primitive fifth root of unity. Set $G=\langle\sigma,\tau,\iota\rangle$.
Then $G\cong\mumu_5^2\rtimes\mumu_2$, and $C_5^\prime$ is $G$-invariant.
Consider the following quadric hypersurface:
$$
Q=\Big\{x_0^2+ax_2x_3-\frac{x_1x_4}{a}=0\Big\}\subset\mathbb{P}^4.
$$
Observe that $Q$ is smooth, and $Q$ is $\langle\tau,\iota\rangle$-invariant.
Let $q\colon X\to Q$ be the blow up of the curve~$C_5^\prime$.
Then we have $\langle\tau,\iota\rangle$-equivariant Sarkisov link \eqref{equation:diagram} for an~appropriate non-singular quintic elliptic curve~$C_5\subset\mathbb{P}^3$.
Since $\langle\tau,\iota\rangle\cong\mumu_5\rtimes\mumu_2$, $X$ is K-stable by Corollary~\ref{corollary:2}.
\end{example}

Let $\Bbbk$ be a~subfield in $\mathbb{C}$ such that $C_5$ is defined over $\Bbbk$.
Then the~Sarkisov link \eqref{equation:diagram} is also defined over the~field $\Bbbk$.
Moreover, the Main Theorem and \cite[Corollary~4.14]{Zhuang} imply the~following result.

\begin{corollary}
\label{corollary:3}
If the intersection $E_{\mathbb{P}^3}\cap E_Q$ does not have $\Bbbk$-points, then $X$ is K-stable.
\end{corollary}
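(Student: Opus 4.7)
The plan is to argue by contradiction, combining Galois descent with the~Main Theorem in essentially the~same manner as in the~deduction of Corollary~\ref{corollary:1}. Since $C_5$ is defined over $\Bbbk$, the~entire Sarkisov link \eqref{equation:diagram} --- including the~quadric $Q$, the~curve $C_5^\prime$, and the~exceptional divisors $E_{\mathbb{P}^3}$ and $E_Q$ --- descends to $\Bbbk$. Consequently, $E_{\mathbb{P}^3}\cap E_Q$ is a~closed subscheme of the~$\Bbbk$-variety $X$, and the~absolute Galois group $G=\mathrm{Gal}(\overline{\Bbbk}/\Bbbk)$ acts on $X_{\overline{\Bbbk}}$ preserving this intersection.

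Now suppose, aiming at a~contradiction, that $X$ is not K-stable. Applying \cite[Corollary~4.14]{Zhuang} to the~Galois action (after first reducing to a~finite Galois subextension $\Bbbk'/\Bbbk$ over which a~hypothetical destabilizer is defined), the~failure of K-stability yields a~$G$-invariant prime divisor $\mathbf{F}$ over $X_{\overline{\Bbbk}}$ with $\beta(\mathbf{F})\leqslant 0$. The~Main Theorem, applied to $\mathbf{F}$, forces the~center $Z$ of $\mathbf{F}$ on $X_{\overline{\Bbbk}}$ to be a~\emph{single} geometric point of $E_{\mathbb{P}^3}\cap E_Q$. Because the~center of a~divisorial valuation is determined by the~valuation, the~$G$-invariance of $\mathbf{F}$ transfers to its center, so $\sigma(Z)=Z$ for every $\sigma\in G$. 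A~closed geometric point of a~$\Bbbk$-scheme fixed by the~entire absolute Galois group is precisely a~$\Bbbk$-rational point; thus $Z\in(E_{\mathbb{P}^3}\cap E_Q)(\Bbbk)$, contradicting the~hypothesis.

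The~only nontrivial technical step is the~invocation of \cite[Corollary~4.14]{Zhuang} in the~Galois setting, which is handled in exactly the~same way as in the~proof of Corollary~\ref{corollary:1} (where it was applied to a~geometric automorphism group rather than to a~Galois group). The~crucial feature inherited from the~Main Theorem is that every destabilizing center is a~single point rather than a~positive-dimensional subvariety: a~single Galois-invariant geometric point is automatically $\Bbbk$-rational, whereas a~Galois-invariant higher-dimensional subscheme of $E_{\mathbb{P}^3}\cap E_Q$ (or a~finite Galois-orbit of points) need not contain any $\Bbbk$-point, and the~argument would break down. Granted this Zhuang-type reduction, the~descent from $\overline{\Bbbk}$ to $\Bbbk$ is immediate.
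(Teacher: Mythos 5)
Your argument is correct and is precisely the deduction the paper intends: the paper gives no proof of this corollary beyond the one-line citation of the Main Theorem together with \cite[Corollary~4.14]{Zhuang}, and your write-up --- extracting a Galois-invariant destabilizing center, which by the Main Theorem must be a single point of $E_{\mathbb{P}^3}\cap E_Q$ and hence a $\Bbbk$-rational point --- is exactly that deduction spelled out. Your closing remark, that the argument hinges on obtaining a \emph{single} invariant geometric point rather than merely a Galois orbit, correctly identifies the role played by Zhuang's equivariant result.
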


\begin{corollary}
\label{corollary:4}
If $C_5(\Bbbk)=\varnothing$ or $C_5^\prime(\Bbbk)=\varnothing$, then $X$ is K-stable.
\end{corollary}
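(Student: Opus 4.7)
The plan is to deduce Corollary~\ref{corollary:4} directly from Corollary~\ref{corollary:3}: it suffices to show that each of the hypotheses $C_5(\Bbbk)=\varnothing$ and $C_5^\prime(\Bbbk)=\varnothing$ forces the intersection $E_{\mathbb{P}^3}\cap E_Q$ to have no $\Bbbk$-points.

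First I would invoke the observation made immediately preceding Corollary~\ref{corollary:3}: because $C_5$ is defined over $\Bbbk$, the entire Sarkisov link~\eqref{equation:diagram} descends to $\Bbbk$. In particular, both blow ups $\pi\colon X\to\mathbb{P}^3$ and $q\colon X\to Q$ are $\Bbbk$-morphisms, the exceptional divisors $E_{\mathbb{P}^3}$ and $E_Q$ are $\Bbbk$-subschemes of $X$, and by construction $\pi$ restricts to the natural $\mathbb{P}^1$-bundle $E_{\mathbb{P}^3}\to C_5$, while $q$ restricts to the $\mathbb{P}^1$-bundle $E_Q\to C_5^\prime$.

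The key step is then essentially tautological: if $(E_{\mathbb{P}^3}\cap E_Q)(\Bbbk)$ contained a point $p$, then $\pi(p)$ would be a $\Bbbk$-point of $\pi(E_{\mathbb{P}^3})=C_5$, and $q(p)$ would be a $\Bbbk$-point of $q(E_Q)=C_5^\prime$; thus both $C_5(\Bbbk)$ and $C_5^\prime(\Bbbk)$ would be non-empty, contradicting the hypothesis. Under either assumption, therefore, the hypothesis of Corollary~\ref{corollary:3} is satisfied, and $X$ is K-stable. The argument is pure functoriality of $\Bbbk$-points under $\Bbbk$-morphisms, so there is no substantive obstacle beyond having Corollary~\ref{corollary:3} at hand — the only external input is the $\Bbbk$-rationality of the link~\eqref{equation:diagram}, which is already built into its construction from $C_5$.
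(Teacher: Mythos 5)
Your proposal is correct and follows exactly the route the paper intends (the paper states Corollary~\ref{corollary:4} without an explicit proof, deducing it from Corollary~\ref{corollary:3}): a $\Bbbk$-point of $E_{\mathbb{P}^3}\cap E_Q$ would map under the $\Bbbk$-morphisms $\pi$ and $q$ to $\Bbbk$-points of $C_5$ and $C_5^\prime$ respectively, so either hypothesis rules out such a point. No gaps.
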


In fact, one can show that $C_5(\Bbbk)=\varnothing$ if and only if $C_5^\prime(\Bbbk)=\varnothing$.

Corollary~\ref{corollary:4} has many applications.
For instance, if $\Bbbk$ is a~number field, there are infinitely many smooth quintic genus one curves in $\mathbb{P}^3$
defined over $\Bbbk$ that do not have $\Bbbk$-rational points \cite{ClarkLacy,Fisher2001,ShinderLin}.
Thus, using Corollary~\ref{corollary:4} and Pfaffian representations of quintic elliptic curves \cite{Fisher2010},
one can construct infinitely many explicit examples of K-stable smooth Fano threefolds in the family \textnumero 2.17.

\begin{example}[T.~Fisher]
\label{example:2}
Fix a prime $p\geqslant 2$. Let $C_5^\prime$ be the quintic elliptic curve in $\mathbb{P}^4$ given by
$$
\left\{\aligned
&x_0^2+px_1x_4-px_2x_3=0,\\
&x_1^2+x_0x_2-px_3x_4=0,\\
&x_2^2+x_1x_3-x_0x_4=0,\\
&px_3^2+px_2x_4-x_0x_1=0,\\
&px_4^2+x_0x_3-x_1x_2=0,
\endaligned
\right.
$$
let $Q$ be the quadric $\{x_0^2+px_1x_4-px_2x_3=0\}$, and let $q\colon X\to Q$ be the blow up along the curve~$C_5^\prime$,
where~$[x_0:x_1:x_2:x_3:x_4]$ are the coordinates on~$\mathbb{P}^4$.
Then \eqref{equation:diagram} exists for an~appropriate quintic elliptic curve~$C_5\subset\mathbb{P}^3$.
We can set $\Bbbk=\mathbb{Q}$. Then $C_5^\prime(\Bbbk)=\varnothing$,
so $X$ is K-stable by Corollary~\ref{corollary:4}.
\end{example}

Let us prove the Main Theorem. Let $\mathbf{F}$ be a prime divisor over~$X$,
and let $Z$ be its center on~$X$.
Suppose that $Z$ is \textbf{not} a point in $E_{\mathbb{P}^3}\cap E_Q$.
Let us show that $\beta(\mathbf{F})>0$.

If $Z$ is a surface, then it follows from \cite{Fujita2016} that $\beta(\mathbf{F})>0$.
Thus, we may assume that
\begin{itemize}
\item either $Z$ is a point,
\item or $Z$ is an irreducible curve.
\end{itemize}

Let $P$ be any point in $Z$. Choose an~irreducible smooth surface $S\subset X$ such that $P\in S$. Set
$$
\tau=\mathrm{sup}\Big\{u\in\mathbb{Q}_{\geqslant 0}\ \big\vert\ \text{the divisor  $-K_X-uS$ is pseudo-effective}\Big\}.
$$
For~$u\in[0,\tau]$, let $P(u)$ be the~positive part of the~Zariski decomposition of the~divisor $-K_X-uS$,
and let $N(u)$ be its negative part. Then $\beta(S)=1-S_X(S)$, where
$$
S_X(S)=\frac{1}{-K_X^3}\int\limits_{0}^{\infty}\mathrm{vol}\big(-K_X-uS\big)du=\frac{1}{24}\int\limits_{0}^{\tau}P(u)^3du.
$$
Let us show how to compute $P(u)$ and $N(u)$. Set $H_{\mathbb{P}^3}=\pi^*(\mathcal{O}_{\mathbb{P}^3}(1))$ and $H_{Q}=q^*(\mathcal{O}_{Q}(1))$. Then
\begin{align*}
H_{\mathbb{P}^3}&\sim 2H_{Q}-E_{Q}, & H_{Q}&\sim 3H_{\mathbb{P}^3}-E_{\mathbb{P}^3},\\
E_{\mathbb{P}^3}&\sim 5H_{Q}-3E_{Q}, & E_{Q}&\sim 5H_{\mathbb{P}^3}-2E_{\mathbb{P}^3}.
\end{align*}
Let us compute $P(u)$ and $N(u)$ in the~following cases: $S\in |H_{\mathbb{P}^3}|$, $S\in|H_{Q}|$, and $S=E_{\mathbb{P}^3}$.

\begin{example}
\label{example:AZ-surfaces-HP3}
Suppose that $S\in |H_{\mathbb{P}^3}|$. Then $\tau=\frac{3}{2}$, since $-K_{X}-uS\sim_{\mathbb{R}}\frac{3-2u}{2}S+\frac{1}{2}E_{Q}$.
Based on that, the~positive part of the~Zariski decomposition has the~following form
$$
P(u)\sim_{\mathbb{R}} \left\{\aligned
& (4-u)H_{\mathbb{P}^3}-E_{\mathbb{P}^3} \ \text{ for } 0\leqslant u\leqslant 1, \\
& (3-2u)H_{Q}\ \text{ for } 1\leqslant u\leqslant \frac{3}{2},
\endaligned
\right.
$$
and the~negative part
$$
N(u)= \left\{\aligned
&0\ \text{ for } 0\leqslant u\leqslant 1, \\
&(u-1)E_{Q}\ \text{ for } 1\leqslant u\leqslant \frac{3}{2},
\endaligned
\right.
$$
which gives
$$
S_{X}(S)=\frac{1}{24}\int\limits_{0}^{\frac{3}{2}}\big(P(u)\big)^3du=\frac{1}{24}\int\limits_0^{1}24-u^3+12u^2-33udu+\frac{1}{24}\int\limits_{1}^{\frac{3}{2}}2(3-2u)^3du=\frac{23}{48}.
$$
\end{example}

\begin{example}
\label{example:AZ-surfaces-HPQ}
Suppose that $S\in |H_{Q}|$. Then $-K_{X}-uS\sim_{\mathbb{R}} \frac{4-3u}{3}S+\frac{1}{3}E_{\mathbb{P}^3}$.
Then $\tau=\frac{4}{3}$,
$$
P(u)\sim_{\mathbb{R}} \left\{\aligned
& (3-u)H_Q-E_Q \ \text{ for } 0\leqslant u\leqslant 1, \\
& (4-3u)H_{\mathbb{P}^3}\ \text{ for } 1\leqslant u\leqslant \frac{4}{3},
\endaligned
\right.
$$
and
$$
N(u)= \left\{\aligned
&0\ \text{ for } 0\leqslant u\leqslant 1, \\
&(u-1)E_{\mathbb{P}^3}\ \text{ for } 1\leqslant u\leqslant \frac{4}{3},
\endaligned
\right.
$$
which gives
$$
S_{X}(S)=\frac{1}{24}\int\limits_0^{1}24-2u^3+18u^2-39udu+\frac{1}{24}\int\limits_{1}^{\frac{4}{3}}(4-3u)^3du=\frac{121}{288}.
$$
\end{example}

\begin{example}
\label{example:AZ-surfaces-E}
Suppose that $S=E$. Then $-K_{X}-uS\sim_{\mathbb{R}} \frac{3-5u}{5}E_{\mathbb{P}^{3}}+\frac{4}{5}E_{Q}$.
Then $\tau=\frac{3}{5}$,
$$
P(u)\sim_{\mathbb{R}} \left\{\aligned
&4H_{\mathbb{P}^3}-(1+u)E_{\mathbb{P}^3} \ \text{ for } 0\leqslant u\leqslant \frac{1}{3}, \\
&(3-5u)H_{Q}\ \text{ for } \frac{1}{3}\leqslant u\leqslant \frac{3}{5},
\endaligned
\right.
$$
and
$$
N(u)= \left\{\aligned
&0\ \text{ for } 0\leqslant u\leqslant \frac{1}{3}, \\
&(3u-1)E_{Q}\ \text{ for } \frac{1}{3}\leqslant u\leqslant \frac{3}{5},
\endaligned
\right.
$$
which gives
$$
S_{X}(S)=\frac{1}{24}\int\limits_0^{\frac{1}{3}}20u^3-60u+24du+\frac{1}{24}\int\limits_{\frac{1}{3}}^{\frac{3}{5}}2(3-5u)^3du=\frac{227}{1080}.
$$
\end{example}

Now, we choose an~irreducible curve $C\subset S$ that contains the~point $P$.
For instance, if $Z$ is a~curve, and $S$ contains $Z$, then we can choose $C=Z$.
Since $S\not\subset\mathrm{Supp}(N(u))$,  we can write
$$
N(u)\big\vert_S=d(u)C+N^\prime(u),
$$
where $N^\prime(u)$ is an~effective $\mathbb{R}$-divisor on $S$ such that $C\not\subset\mathrm{Supp}(N^\prime(u))$, and $d(u)=\mathrm{ord}_C(N(u)\vert_S)$.
Now, for every $u\in [0,\tau]$, we set
$$
t(u)=\sup\Big\{v\in \mathbb R_{\geqslant 0} \ \big|\ \text{the divisor $P(u)\big|_S-vC$ is pseudo-effective}\Big\}.
$$
For $v\in [0, t(u)]$, we let $P(u,v)$ be the~positive part of the~Zariski decomposition of $P(u)|_S-vC$,
and we let $N(u,v)$ be its negative part. Following \cite{AbbanZhuang,Book}, we let
$$
S\big(W^S_{\bullet,\bullet};C\big)=\frac{3}{(-K_X)^3}\int\limits_0^{\tau}d(u)\Big(P(u)\big\vert_{S}\Big)^2du+\frac{3}{(-K_X)^3}\int\limits_0^\tau\int\limits_0^{\infty}\mathrm{vol}\big(P(u)\big\vert_{S}-vC\big)dvdu,
$$
which we can rewrite as
$$
S\big(W^S_{\bullet,\bullet};C\big)=\frac{3}{(-K_X)^3}\int\limits_0^{\tau}d(u)\big(P(u,0)\big)^2du+\frac{3}{(-K_X)^3}\int\limits_0^\tau\int\limits_0^{t(u)}\big(P(u,v)\big)^2dvdu.
$$
If $Z$ is a~curve, $Z\subset S$ and $C=Z$, then it follows from \cite{AbbanZhuang,Book} that
\begin{equation}
\label{equation:Kento-curve}
\frac{A_X(\mathbf{F})}{S_X(\mathbf{F})}\geqslant\min\Bigg\{\frac{1}{S_X(S)},\frac{1}{S\big(W^S_{\bullet,\bullet};C\big)}\Bigg\}.
\end{equation}
Hence, if $Z$ is a~curve, $Z\subset S$, $C=Z$ and $S(W^S_{\bullet,\bullet};C)<1$, then $\beta(\mathbf{F})>0$, since $S_X(S)<1$ by \cite{Fujita2016}.

\begin{lemma}
\label{lemma:E}
Suppose that $Z$ is a~curve, $Z\subset E_{\mathbb{P}^3}$, and $\pi(Z)$ is not a~point. Then $\beta(\mathbf{F})>0$.
\end{lemma}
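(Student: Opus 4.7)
The plan is to invoke inequality~\eqref{equation:Kento-curve} with the admissible choice $S = E_{\mathbb{P}^3}$ (which contains $Z$ by assumption) and $C = Z$. By Example~\ref{example:AZ-surfaces-E}, $S_X(S) = 227/1080 < 1$, so the lemma reduces to proving that $S(W^S_{\bullet,\bullet}; Z) < 1$.

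I would identify $S = \mathbb{P}(N_{C_5/\mathbb{P}^3}^\vee)$ as a geometrically ruled surface $p \colon S \to C_5$ and equip $\mathrm{Num}(S)_{\mathbb{R}}$ with the basis $\{\xi, \mathbf{l}\}$, where $\mathbf{l}$ is a fiber of $p$ and $\xi = -E_{\mathbb{P}^3}|_S$, satisfying $\mathbf{l}^2 = 0$, $\xi \cdot \mathbf{l} = 1$, and $\xi^2 = -\deg N_{C_5/\mathbb{P}^3} = -20$. The hypothesis $p(Z) = C_5$ forces the numerical class $[Z] \equiv a\xi + b\mathbf{l}$ with $a = Z \cdot \mathbf{l} \geq 1$, and ampleness of $-K_X$ gives $b = (-K_X) \cdot Z \geq 1$. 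From Example~\ref{example:AZ-surfaces-E} one has $P(u)|_S \equiv (1+u)\xi + 20\mathbf{l}$ and $N(u)|_S = 0$ for $u \in [0, \tfrac{1}{3}]$, while $P(u)|_S \equiv (3-5u)(\xi + 15\mathbf{l})$ and $N(u)|_S \equiv (3u-1)(2\xi + 25\mathbf{l})$ for $u \in [\tfrac{1}{3}, \tfrac{3}{5}]$; the support of $N(u)|_S$ is the bisection $E_Q \cap S$, so $d(u) \equiv 0$ unless $Z$ is a component of $E_Q \cap S$. For each $u$, I would compute the Zariski decomposition of $P(u)|_S - vZ$ on the ruled surface $S$, find the pseudo-effective threshold $t(u)$, and substitute into the definition of $S(W^S_{\bullet,\bullet}; Z)$ to obtain an explicit rational expression in $(a, b)$.

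The hard part is verifying $S(W^S_{\bullet,\bullet}; Z) < 1$ uniformly in $(a, b)$: this requires a description of $\mathrm{Pseff}(S)$, which depends on the Harder--Narasimhan type of $N_{C_5/\mathbb{P}^3}$. In the generic, semistable situation, $\mathrm{Pseff}(S)$ is spanned by $\mathbf{l}$ and a nef section $\sigma$ with $\sigma^2 = 0$; the constraint $Z \cdot \sigma \geq 0$ then yields $b \geq 10a$, and the integrals become closed-form rational expressions whose monotonicity in $a$ and $b$ delivers the inequality. In the unstable case, there is a unique section $\sigma_0$ with $\sigma_0^2 < 0$, which must be treated individually but submits to the same machinery. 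Finally, the distinguished case $Z = E_Q \cap S$, where $d(u) \not\equiv 0$, is handled by observing that the extra contribution from the $d(u)$-term is offset by the correspondingly smaller pseudo-effective threshold $t(u)$, so the resulting estimate remains strictly below~$1$.
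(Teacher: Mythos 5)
Your overall strategy coincides with the paper's: apply \eqref{equation:Kento-curve} to the flag $E_{\mathbb{P}^3}\supset Z$, use Example~\ref{example:AZ-surfaces-E} for $P(u)|_S$ and $N(u)|_S$, write $Z$ numerically on the ruled surface $E_{\mathbb{P}^3}\to C_5$, and verify $S(W^S_{\bullet,\bullet};Z)<1$ by explicit Zariski decompositions. Your classes check out (your $\xi=-E_{\mathbb{P}^3}|_S$ equals $C_0-\tfrac{20-e}{2}\ell$ in the Hartshorne normalization the paper uses, and $E_Q|_S\equiv 2\xi+25\mathbf{l}$, $H_Q|_S\equiv\xi+15\mathbf{l}$ agree with the paper's $2C_0+(5+e)\ell$ and $C_0+(15-\lambda)\ell$). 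The paper's route to the uniform bound is cleaner than the one you sketch: since $a\geqslant 1$ and $Z-C_0$ is pseudo-effective, $\mathrm{vol}(P(u)|_S-vZ)\leqslant\mathrm{vol}(P(u)|_S-vC_0)$, so everything reduces to the single extremal class $Z=C_0$; and the $d(u)$-term is simply bounded by $d(u)\leqslant 2(3u-1)$ (because $\mathrm{ord}_Z(E_Q|_S)\leqslant E_Q|_S\cdot\mathbf{l}=2$) and added on, contributing only $\tfrac{32}{405}$. In particular your claim that the $d(u)$-contribution is ``offset by the correspondingly smaller threshold'' is never needed and, as stated, is an assertion you would still have to prove.

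The genuine gap is that you have no control on how unstable $N_{C_5/\mathbb{P}^3}$ can be, and the final inequality fails without such control. The extremal computation depends on the invariant $e$ of the ruled surface (equivalently on $\sigma_0^2$ in your notation): the paper gets $S(W^S_{\bullet,\bullet};C_0)\leqslant\tfrac{377e}{25920}+\tfrac{4177}{6480}$, which is increasing in $e$ and exceeds $1$ once $e>\tfrac{9212}{377}\approx 24.4$. So in your ``unstable case'' the ``same machinery'' does not automatically deliver the bound; you must first prove $e$ is small. The paper does this with two geometric inputs you omit: $E_{\mathbb{P}^3}^3=-20$ forces $e$ even and $e\geqslant 0$ (via Nagata's $e\geqslant -1$), and nefness of $3H_{\mathbb{P}^3}-E_{\mathbb{P}^3}\sim H_Q$ restricted to $E_{\mathbb{P}^3}$ gives $0\leqslant(C_0+(15-\lambda)\ell)\cdot C_0=15-e-\tfrac{20-e}{2}$, i.e.\ $e\leqslant 10$. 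Without this step (or some substitute bounding the Harder--Narasimhan slopes of the normal bundle), your verification cannot be completed uniformly.
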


\begin{proof}
Let $e$ be the~invariant of the~ruled surface $E_{\mathbb{P}^3}$ defined in Proposition 2.8 in \cite[Chapter V]{Hartshorne}.
Then $e\geqslant -1$ by \cite{Nagata}. Moreover, there is a~section $C_{0}$ of the~projection $E_{\mathbb{P}^3}\to C_5$ such that $C_0^2=-e$.
Let $\ell$ a~fiber of this projection.
Then
$H_{\mathbb{P}^{3}}\vert_{E_{\mathbb{P}^3}}\equiv 5\ell$ and $E_{\mathbb{P}^3}\vert_{E_{\mathbb{P}^3}}\equiv -C_{0} + \lambda \ell$
for some integer $\lambda$. Since
$$
-20=-c_{1}\big(N_{C_5/\mathbb{P}^{3}}\big)=E_{\mathbb{P}^3}^{3}=(-C_{0}+\lambda \ell)^{2}=-e -2\lambda,
$$
we get $\lambda=\frac{20-e}{2}$. 
Then $e$ is even, so $e\geqslant 0$.
Moreover, since $3H_{\mathbb{P}^3}-E_{\mathbb{P}^3}\sim H_{Q}$ is nef,
the divisor
$$
\big(3H_{\mathbb{P}^{3}}-E_{\mathbb{P}^{3}}\big)\big\vert_{E_{\mathbb{P}^{3}}}\equiv C_{0}+(15-\lambda)\ell
$$
is also nef. Then
$0\leqslant \big( C_{0}+(15-\lambda)\ell\big)\cdot C_{0} = 15 - e - \lambda = 15 - e - \frac{20-e}{2}$,
which implies $e\leqslant 10$ and hence we have $e\in\{0,2,4,6,8,10\}$.

Now, we set $S=E_{\mathbb{P}^{3}}$ and $C=Z$.
Using \eqref{equation:Kento-curve}, we see that to prove that $\beta(\mathbf{F})>0$,  it is enough to show that $S(W^S_{\bullet,\bullet};C)<1$.
Let us estimate $S(W^S_{\bullet,\bullet};C)$.
It follows from Example~\ref{example:AZ-surfaces-E} that $\tau=\frac{3}{5}$ and
$$
P(u)\big\vert_{S}\equiv\left\{\aligned
& (1+u)C_{0}+\bigg(10+\frac{1}{2}e+\frac{1}{2}ue-10u\bigg)\ell \ \text{ for } 0\leqslant u \leqslant \frac{1}{3}, \\
& (3-5u)C_{0}+\bigg(15+\frac{3}{2}e-25u- \frac{5}{2}ue \bigg)\ell \ \text{ for } \frac{1}{3} \leqslant u\leqslant \frac{3}{5}.
\endaligned
\right.
$$
Moreover, if $0\leqslant u \leqslant \frac{1}{3}$, then $N(u)=0$.
Furthermore, if $\frac{1}{3} \leqslant u\leqslant \frac{3}{5}$, then
$$
N(u)\big\vert_{S}=(3u-1)E_Q\big\vert_{S},
$$
and $E_Q\big\vert_{S}\equiv 2C_0+(5+e)\ell$.
But it follows from Proposition 2.20 in \cite[Chapter~V]{Hartshorne} that
$$
Z\equiv aC_{0} + b\ell
$$
for some integers $a$ and $b$  such that $a\geqslant 0$ and $b\geqslant ae$.
Since $\pi(Z)$ is not a~point, we also have $a\geqslant 1$.  This gives $\mathrm{ord}_{C}(E_Q\vert_{S})\leqslant 2$.
Hence, if $\frac{1}{3} \leqslant u\leqslant \frac{3}{5}$, then $d(u)\leqslant 2(3u-1)$.
This gives
\begin{multline*}
S(W_{\bullet,\bullet}^{{S}};{C})=
\frac{3}{24}\int\limits_{\frac{1}{3}}^{\frac{3}{5}}d(u)\Big(P(u)\big\vert_{S}\Big)^2du+
\frac{3}{24}\int\limits_0^{\frac{3}{5}}\int\limits_0^\infty \mathrm{vol}\big(P(u)\big\vert_{{S}}-v{C}\big)dvdu\leqslant \\
\leqslant\frac{3}{24}\int\limits_{\frac{1}{3}}^{\frac{3}{5}}2(3u-1)\Big(P(u)\big\vert_{S}\Big)^2du+
\frac{3}{24}\int\limits_0^{\frac{3}{5}}\int\limits_0^\infty \mathrm{vol}\big(P(u)\big\vert_{{S}}-v{C}\big)dvdu = \\
=\frac{3}{24}\int\limits_{\frac{1}{3}}^{\frac{3}{5}}2(3u-1)(250u^2-300u+90)du+
\frac{3}{24}\int\limits_0^{\frac{3}{5}}\int\limits_0^\infty \mathrm{vol}\big(P(u)\big\vert_{{S}}-v{C}\big)dvdu = \\
=\frac{32}{405}+\frac{3}{24}\int\limits_0^{\frac{3}{5}}\int\limits_0^\infty \mathrm{vol}\big(P(u)\big\vert_{{S}}-v{C}\big)dvdu
=\frac{32}{405}+\frac{3}{24}\int\limits_0^{\frac{3}{5}}\int\limits_0^\infty \mathrm{vol}\big(P(u)\big\vert_{{S}}-v(aC_{0} + b\ell)\big)dvdu.
\end{multline*}
On the~other hand, since $a\geqslant 1$, we have
$$
\frac{3}{24}\int\limits_0^{\frac{3}{5}}\int\limits_0^\infty \mathrm{vol}\big(P(u)\big\vert_{{S}}-v(aC_{0}+b\ell)\big)dvdu\leqslant\frac{3}{24}\int\limits_0^{\frac{3}{5}}\int\limits_0^\infty \mathrm{vol}\big(P(u)\big\vert_{{S}}-vC_{0}\big)dvdu,
$$
Therefore, to show that $S(W^S_{\bullet,\bullet};C)<1$, we may assume that $Z=C_0$. Then
$$
t(u) =\left\{\aligned
& 1+u\ \text{ for } 0\leqslant u\leqslant \frac{1}{3}, \\
& 3-5u\ \text{ for } \frac{1}{3}\leqslant u\leqslant \frac{3}{5}.
\endaligned
\right.
$$
Moreover, if $0 \leqslant u \leqslant \frac{1}{3}$ and $v\in[0,t(u)]$, then
$$
P(u,v) =(1+u-v)C_{0}+ \bigg(10+\frac{1}{2}e+\frac{1}{2}ue-10u \bigg)\ell
$$
and the~negative part $N(u,v)$ is trivial.
Similarly, if $\frac{1}{3}\leqslant u \leqslant \frac{3}{5}$ and  $v\in[0,t(u)]$, then
$$
P(u,v)=(3-5u-v)C_{0}+\bigg(15+\frac{3}{2}e-25u- \frac{5}{2}ue \bigg)\ell
$$
and the~negative part $N(u,v)$ is trivial. Using the~collected data, we compute
\begin{multline*}
\frac{3}{24}\int\limits_0^{\frac{3}{5}}\int\limits_0^\infty \mathrm{vol}\big(P(u)\big\vert_{{S}}-vC_{0}\big)dvdu=\frac{3}{24}\int\limits_0^{\frac{3}{5}}\int\limits_0^{t(u)}\big(P(u,v)\big)^2dvdu=\\
=\frac{3}{24}\int\limits_{0}^{\frac{1}{3}}\int\limits_{0}^{u+1}\bigg(20+(e-20)v-20u^2-ev^2+(e+20)vu\bigg)dvdu+\\
+\frac{3}{24}\int\limits_{\frac{1}{3}}^{\frac{3}{5}}\int\limits_{0}^{3-5u}\bigg( 90-300u+(3e-30)v+250u^2-ev^2 +(-5e+50)vu\bigg)dvdu=\frac{377e}{25920}+\frac{733}{1296}.
\end{multline*}
As explained above, this gives
$$
S(W_{\bullet,\bullet}^{{S}};{C})\leqslant\frac{32}{405}+\frac{3}{24}\int\limits_0^{\frac{3}{5}}\int\limits_0^\infty \mathrm{vol}\big(P(u)\big\vert_{{S}}-vC_{0})\big)dvdu=\frac{377e}{25920} + \frac{4177}{6480}.
$$
Since $e \in \{0,2,4,6,8,10\}$, we conclude that $S(W_{\bullet,\bullet}^{{S}};{C})<1$. Then $\beta(\mathbf{F})>0$ by \eqref{equation:Kento-curve}.
\end{proof}

Let $f\colon\widetilde{S}\to S$ be the~blow up of the~point $P$,
and let $F$ be the~$f$-exceptional curve. Write
$$
f^*\big(N(u)\big\vert_S\big)=\widetilde{d}(u)F+\widetilde{N}^\prime(u),
$$
where $\widetilde{N}^\prime(u)$ is the~strict transform of the~divisor $N(u)\vert_{S}$ on the~surface $\widetilde{S}$,
and $\widetilde{d}(u)=\mathrm{mult}_P(N(u)\vert_S)$.
For every $u\in [0,\tau]$, we set
$$
\widetilde{t}(u)=\sup\Big\{v\in \mathbb R_{\geqslant 0} \ \big|\ \text{the divisor $f^*\big(P(u)\big|_S\big)-vF$ is pseudo-effective}\Big\}.
$$
For $v\in [0, \widetilde{t}(u)]$, we let $\widetilde{P}(u,v)$ be the~positive part of the~Zariski decomposition of $f^*(P(u)|_S)-vF$,
and we let $\widetilde{N}(u,v)$ be its negative part. As above, we let
$$
S\big(W^S_{\bullet,\bullet};F\big)=\frac{3}{(-K_X)^3}\int\limits_0^{\tau}\widetilde{d}(u)\Big(P(u)\big\vert_{S}\Big)^2du+
\frac{3}{(-K_X)^3}\int\limits_0^\tau\int\limits_0^{\infty}\mathrm{vol}\big(f^*\big(P(u)\big|_S\big)-vF\big)dvdu,
$$
which we can rewrite as
$$
S\big(W^S_{\bullet,\bullet};F\big)=\frac{3}{(-K_X)^3}\int\limits_0^{\tau}\widetilde{d}(u)\big(P(u,0)\big)^2du+
\frac{3}{(-K_X)^3}\int\limits_0^\tau\int\limits_0^{\widetilde{t}(u)}\big(\widetilde{P}(u,v)\big)^2dvdu.
$$
For every point $O\in F$, we let
$$
F_O\big(W_{\bullet,\bullet,\bullet}^{\widetilde{S},F}\big)=\frac{6}{(-K_X)^3}
\int\limits_0^\tau\int\limits_0^{\widetilde{t}(u)}\big(\widetilde{P}(u,v)\cdot F\big)\cdot \mathrm{ord}_O\big(\widetilde{N}^\prime(u)\big|_F+\widetilde{N}(u,v)\big|_F\big)dvdu,
$$
and
$$
S\big(W_{\bullet, \bullet,\bullet}^{\widetilde{S},F};O\big)=\frac{3}{(-K_X)^3}\int\limits_0^\tau\int\limits_0^{\widetilde{t}(u)}\big(\widetilde{P}(u,v)\cdot F\big)^2dvdu+F_O\big(W_{\bullet,\bullet,\bullet}^{\widetilde{S},F}\big).
$$
Then it follows from \cite{AbbanZhuang,Book} that
\begin{equation}
\label{equation:Kento-point}
\frac{A_{X}(\mathbf{F})}{S_X(\mathbf{F})}\geqslant
\min\Bigg\{\frac{1}{S_X({S})},\frac{2}{S\big(W^{S}_{\bullet,\bullet};F\big)},\inf_{O\in F}\frac{1}{S\big(W_{\bullet, \bullet,\bullet}^{\widetilde{S},F};O\big)}\Bigg\}.
\end{equation}
In the~next two lemmas, we show how to apply this inequality to prove that $\beta(\mathbf{F})>0$ under certain generality conditions on the~position of the~point $P$.

\begin{lemma}
\label{lemma:dP4-blow-up}
Let $S$ be a~general surface in $|H_{\mathbb{P}^3}|$ such that $P\in S$.
Suppose $P\not\in E_{\mathbb{P}^3}$, $-K_S$ is ample, and $P$ is not contained in a $(-1)$-curve in $S$.
Then $\beta(\mathbf{F})>0$.
\end{lemma}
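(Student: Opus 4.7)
The plan is to apply inequality~\eqref{equation:Kento-point} with the given surface $S\in|H_{\mathbb{P}^3}|$ and the flag $(S,F)$, where $f\colon\widetilde{S}\to S$ denotes the blow-up of $S$ at $P$ with exceptional curve $F$. Since $S_X(S)=\frac{23}{48}<1$ by Example~\ref{example:AZ-surfaces-HP3}, it will suffice to show $S(W^S_{\bullet,\bullet};F)<2$ and $S(W^{\widetilde{S},F}_{\bullet,\bullet,\bullet};O)<1$ for every $O\in F$. Ampleness of $-K_S$ tells us that $S$ is a smooth del Pezzo surface of degree $4$; concretely it is the blow-up of the plane $\pi(S)$ at the five points of $\pi(S)\cap C_5$, with $L:=H_{\mathbb{P}^3}|_S$ the line class and $E_1,\dots,E_5$ the exceptional curves, so that $-K_S=3L-\sum E_i$, $E_{\mathbb{P}^3}|_S=\sum E_i$ and $E_Q|_S=5L-2\sum E_i$.

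I would first compute $\widetilde{d}(u)=\mathrm{mult}_P(N(u)|_S)$. By Example~\ref{example:AZ-surfaces-HP3}, $N(u)=0$ on $[0,1]$ so $\widetilde{d}(u)=0$ there, while on $[1,\frac{3}{2}]$ we have $\widetilde{d}(u)=(u-1)\mathrm{mult}_P(E_Q|_S)$. Geometrically $E_Q|_S$ is the strict transform in $S$ of the plane section of the secant scroll of $C_5$; since $P\notin E_{\mathbb{P}^3}$ and $S$ is chosen generically through $P$, this multiplicity is at most $1$, giving $\widetilde{d}(u)\le u-1$.

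The central task is the Zariski decomposition of $f^*(P(u)|_S)-vF$ on $\widetilde{S}$. Because $P$ lies on no $(-1)$-curve of $S$, the only irreducible curves on $\widetilde{S}$ of negative self-intersection are $F$ itself, the strict transforms of the sixteen $(-1)$-curves on $S$ (which are disjoint from $F$ and therefore harmless), and the ten strict transforms through $P$ of the ten rulings of $S$: the five lines $L-E_i$ and the five conics $2L-\sum_{j\ne i}E_j$, each becoming a $(-1)$-curve on $\widetilde{S}$. The class $P(u)|_S$ is ample and explicit, so the Seshadri constant of $P(u)|_S$ at $P$ controls the first regime: it equals $3-u$ on $[0,1]$ (attained by the lines) and $2(3-2u)$ on $[1,\frac{3}{2}]$ (since $P(u)|_S$ is a multiple of $-K_S$ and $\epsilon(-K_S,P)=2$ is attained by all ten rulings simultaneously). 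Beyond this threshold one subtracts successive $\mathbb{R}$-linear combinations of the ten ruling transforms from the positive part; their pairwise intersection numbers on $\widetilde{S}$ are $0$ or $1$ in an $S_5$-symmetric pattern, so the resulting piecewise formulas for $\widetilde{P}(u,v)$ and $\widetilde{N}(u,v)$ can be written in closed form.

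Once these data are assembled, both $S(W^S_{\bullet,\bullet};F)$ and $S(W^{\widetilde{S},F}_{\bullet,\bullet,\bullet};O)$ are integrals of piecewise polynomials. For generic $O\in F$ the term $\mathrm{ord}_O(\widetilde{N}'(u)|_F+\widetilde{N}(u,v)|_F)$ vanishes and only the self-intersection integral contributes; at the at most ten points $O$ where $F$ meets a ruling strict transform, the order picks up a single controlled term. I expect both quantities to come out strictly less than $2$ and $1$ respectively, giving $\beta(\mathbf{F})>0$ by~\eqref{equation:Kento-point}. The main obstacle is precisely the bookkeeping of the Zariski decomposition past the Seshadri threshold: subtracting up to ten $(-1)$-curves creates several additional subregimes in $v$ and requires inverting an intersection matrix on $\widetilde{S}$. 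By the $S_5$-symmetry of the configuration the answer should be uniform in $P$, so the resulting integrals evaluate to fixed rational numbers once the tabulation is completed.
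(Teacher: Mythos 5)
Your plan is essentially the paper's proof: apply \eqref{equation:Kento-point} to the flag $P\in F\subset\widetilde S$ over the degree-four del Pezzo surface $S$, identify the ten rulings through $P$ (five lines $L-\mathbf{e}_i$ and five conics $2L+\mathbf{e}_i-\sum_j\mathbf{e}_j$, whose strict transforms are the only negative curves on $\widetilde S$ meeting $F$), compute $\widetilde d(u)$ from $E_Q|_S$, and integrate. The paper streamlines the bookkeeping you worry about by working with the symmetric sums $Z=\sum Z_i$ and $Z'=\sum Z_i'$ rather than ten individual curves, and it turns out only the lines ever enter the negative part (for $3-u\leqslant v\leqslant\frac{7-3u}{2}$ when $0\leqslant u\leqslant 1$; on $[1,\frac32]$ the decomposition stays trivial up to $\widetilde t(u)=6-4u$), so there are far fewer subregimes than you anticipate. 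The one substantive shortfall is that you stop at ``I expect both quantities to come out strictly less than $2$ and $1$'': since the actual values are $S(W^S_{\bullet,\bullet};F)=\frac{691}{384}\approx 1.80$ (resp.\ $\frac{229}{128}$ if $P\notin E_Q$) and $S(W_{\bullet,\bullet,\bullet}^{\widetilde S,F};O)\leqslant\frac{337}{384}\approx 0.88$, the margins are not generous and the integrals genuinely have to be evaluated for the lemma to be proved.
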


\begin{proof}
Observe that $\pi(S)$ is a general plane in $\mathbb{P}^3$ that contains $\pi(P)$,
and $\pi$ induces a birational morphism $\varpi\colon S\to\pi(S)$ that blows up the~points $\pi(S)\cap C_5$.
Let $\mathbf{e}_1$, $\mathbf{e}_2$, $\mathbf{e}_3$, $\mathbf{e}_4$, $\mathbf{e}_5$ be the~$\varpi$-exceptional curves.
Then $E_{\mathbb{P}^3}\big\vert_{S}=\mathbf{e}_1+\mathbf{e}_2+\mathbf{e}_3+\mathbf{e}_4+\mathbf{e}_5$.

Let $L=H_{\mathbb{P}^3}\vert_{S}$. For $i\in\{1,2,3,4,5\}$,
the~pencils $|L-\mathbf{e}_i|$ and $|2L+\mathbf{e}_i-\mathbf{e}_1-\mathbf{e}_2-\mathbf{e}_3-\mathbf{e}_4-\mathbf{e}_5|$~contain irreducible curves that pass through the~point $P$.
Denote these curves by $Z_i$ and $Z_i^\prime$, respectively.
Then $\varpi(Z_i)$ is the~line in $\pi(S)$ that passes through $\varpi(P)$ and $\varpi(\mathbf{e}_i)$,
and $\varpi(Z_i^\prime)$ is the~conic that passes through $\varphi(P)$ and all points among
$\varpi(\mathbf{e}_1)$, $\varpi(\mathbf{e}_2)$, $\varpi(\mathbf{e}_3)$, $\varpi(\mathbf{e}_4)$, $\varpi(\mathbf{e}_5)$ except for $\varpi(\mathbf{e}_i)$.
Set
\begin{align*}
Z&=\sum_{i=1}^{5}Z_i\sim 5L-(\mathbf{e}_1+\mathbf{e}_2+\mathbf{e}_3+\mathbf{e}_4+\mathbf{e}_5),\\
Z^\prime&=\sum_{i=1}^{5}Z_i^\prime\sim 10L-4(\mathbf{e}_1+\mathbf{e}_2+\mathbf{e}_3+\mathbf{e}_4+\mathbf{e}_5).
\end{align*}
Let $\widetilde{Z}$ and $\widetilde{Z}^\prime$ be the~proper transforms on $\widetilde{S}$ of the~curves $Z$ and $Z^\prime$, respectively.
On the~surface $\widetilde{S}$, we have $F^2=-1$, $\widetilde{Z}\cdot\widetilde{Z}^\prime=F\cdot\widetilde{Z}=F\cdot\widetilde{Z}^\prime=5$, $\widetilde{Z}^{2}=(\widetilde{Z}^\prime)^{2}=-5$.
Using Example~\ref{example:AZ-surfaces-HP3}, we get $\tau=\frac{3}{2}$ and
$$
f^{*}\big(P(u)\big\vert_{S}\big)-vF \sim_{\mathbb{R}}\left\{\aligned
&\frac{3-2u}{5}\widetilde{Z}+\frac{1+u}{10}\widetilde{Z}^\prime+\frac{7-3u-2v}{2}F  \ \text{ for } 0\leqslant u\leqslant 1, \\
&\frac{3-2u}{5}\big(\widetilde{Z}+\widetilde{Z}^\prime\big)+(6-4u-v)F\ \text{ for } 1\leqslant u\leqslant \frac{3}{2}.
\endaligned
\right.
$$
This gives
$$
\widetilde{t}(u)=\left\{\aligned
&\frac{7-3u}{2}\ \text{ for } 0\leqslant u\leqslant 1, \\
&6-4u\ \text{ for } 1\leqslant u\leqslant \frac{3}{2}.
\endaligned
\right.
$$
Furthermore, if $0\leqslant u\leqslant 1$, then
$$
\widetilde{P}(u,v)=
\left\{\aligned
&\frac{3-2u}{5}\widetilde{Z}+\frac{1+u}{10}\widetilde{Z}^\prime+\frac{7-3u-2v}{2}F  \ \text{ for } 0\leqslant v\leqslant 3-u, \\
&\frac{18-7u-5v}{5}\widetilde{Z}+\frac{1+u}{10}\widetilde{Z}^\prime+\frac{7-3u-2v}{2}F \ \text{ for } 3-u \leqslant v \leqslant \frac{7-3u}{2},
\endaligned
\right.
$$
and
$$
\widetilde{N}(u,v)=
\left\{\aligned
&0\ \text{ for } 0\leqslant v\leqslant 3-u, \\
&(v+u-3)\widetilde{Z}\ \text{ for } 3-u\leqslant v\leqslant\frac{7-3u}{2},
\endaligned
\right.
$$
which gives
$$
\big(\widetilde{P}(u,v)\big)^2=
\left\{\aligned
&u^2-v^2-8u+11\ \text{ for } 0\leqslant v\leqslant 3-u, \\
&2(4-u-v)(7-3u-2v)\ \text{ for } 3-u\leqslant v\leqslant \frac{7-3u}{2},
\endaligned
\right.
$$
and
$$
\widetilde{P}(u,v)\cdot F=
\left\{\aligned
&v\ \text{ for } 0\leqslant v\leqslant 3-u, \\
&15-5u-4v\ \text{ for } 3-u\leqslant v\leqslant\frac{7-3u}{2}.
\endaligned
\right.
$$
If $1\leqslant u\leqslant \frac{3}{2}$, then
$\widetilde{P}(u,v)=\frac{3-2u}{5}(\widetilde{Z}+\widetilde{Z})+(6-4u-v)F$
and $\widehat{N}(u,v)=0$ for $v\in[0,6-4u]$, so that
$$
\widehat{P}(u,v)^2=(6-4u-v)(6-4u+v)
$$
and $\widehat{P}(u,v)\cdot F=v$ for every $v\in[0,6-4u]$.

Set $R=E_Q\vert_{S}$. Then $R$ is smooth curve, since $S$ is general surface in $|H_{\mathbb{P}^3}|$ that passes through~$P$.
Let $\widetilde{R}$ be the~proper transform of the~curve $R$ on the~surface $\widetilde{R}$.
Then it follows from Example~\ref{example:AZ-surfaces-HP3} that
$$
\widetilde{N}^\prime(u)=\left\{\aligned
&0  \ \text{ for } 0\leqslant u\leqslant 1, \\
&(u-1)\widetilde{R} \ \text{ for } 1\leqslant u\leqslant \frac{3}{2}.
\endaligned
\right.
$$
If $0\leqslant u\leqslant 1$, we have $\widetilde{d}(u)=0$.
Similarly, if $1\leqslant u\leqslant \frac{3}{2}$ and $R$ does not contain $P$, then $\widetilde{d}(u)=0$.
Finally, if $1\leqslant u\leqslant \frac{3}{2}$ and $P\in R$, then $\widetilde{d}(u)=(u-1)$.

Using the~data collected above, we can compute $S(W^S_{\bullet,\bullet};F)$.
Namely, if $P\in E_{Q}$, then
\begin{multline*}
S\big(W^S_{\bullet,\bullet};F\big)=\frac{1}{8}\int\limits_1^{\frac{3}{2}}(u-1)(16u^2-48u+36)du+
\frac{1}{8}\int\limits_0^{\frac{3}{2}}\int\limits_0^{\widetilde{t}(u)}\big(\widetilde{P}(u,v)\big)^2dvdu=\\
=\frac{1}{96}+\frac{1}{8}\int\limits_{0}^{1}\int\limits_{0}^{3-u}u^2-v^2-8u+11dvdu+\frac{1}{8}\int\limits_{0}^{1}\int\limits_{3-u}^{\frac{7-3u}{2}}2(4-u-v)(7-3u-2v)dvdu+\\
+\frac{1}{8}\int\limits_{1}^{\frac{3}{2}}\int\limits_{0}^{6-4u}(6-4u-v)(6-4u+v)dvdu=\frac{1}{96}+\frac{655}{384}+\frac{1}{12}=\frac{691}{384}<2.
\end{multline*}
Similarly, if $P\not\in E_{Q}$, then $S(W^S_{\bullet,\bullet};F)=\frac{655}{384}+\frac{1}{12}=\frac{229}{128}<2$.

Now, let $O$ be any point in $F$. Then
\begin{multline*}
\quad\quad\quad S\big(W_{\bullet, \bullet,\bullet}^{\widetilde{S},F};O\big)=\frac{1}{8}\int\limits_0^{\frac{3}{2}}\int\limits_0^{\widetilde{t}(u)}\big(\widetilde{P}(u,v)\cdot F\big)^2dvdu+F_O\big(W_{\bullet,\bullet,\bullet}^{\widetilde{S},F}\big)=\frac{155}{192}+F_O\big(W_{\bullet,\bullet,\bullet}^{\widetilde{S},F}\big)=\\
=\frac{1}{8}\int\limits_{0}^{1}\int\limits_{0}^{3-u}v^{2}dvdu+\frac{1}{8}\int\limits_{0}^{1}\int\limits_{3-u}^{\frac{7-3u}{2}}(15-5u-4v)^2dvdu+\frac{1}{8}\int\limits_{1}^{\frac{3}{2}}\int\limits_{0}^{6-4u}v^2dvdu+F_O\big(W_{\bullet,\bullet,\bullet}^{\widetilde{S},F}\big)=\frac{163}{192}+F_O\big(W_{\bullet,\bullet,\bullet}^{\widetilde{S},F}\big).
\end{multline*}
Moreover, if $O\in\widetilde{R}\cap\widetilde{Z}$, then we compute $F_O(W_{\bullet,\bullet,\bullet}^{\widetilde{S},F})$ as follows:
\begin{multline*}
F_O\big(W_{\bullet,\bullet,\bullet}^{\widetilde{S},F}\big)=
\frac{1}{4}\int\limits_0^{\frac{3}{2}}\int\limits_0^{\widetilde{t}(u)}\big(\widetilde{P}(u,v)\cdot F\big)\cdot \mathrm{ord}_O\big(\widetilde{N}^\prime(u)\big|_F\big)dvdu+
\frac{1}{4}\int\limits_0^{\frac{3}{2}}\int\limits_0^{\widetilde{t}(u)}\big(\widetilde{P}(u,v)\cdot F\big)\cdot \mathrm{ord}_O\big(\widetilde{N}(u,v)\big|_F\big)dvdu=\\
=\frac{1}{4}\int\limits_1^{\frac{3}{2}}\int\limits_0^{6-4u}\big(\widetilde{P}(u,v)\cdot F\big)(u-1)\big(\widetilde{R}\cdot F\big)_Odvdu+
\frac{1}{4}\int\limits_0^{1}\int\limits_{3-u}^{\frac{7-3u}{2}}\big(\widetilde{P}(u,v)\cdot F\big)(v+u-3)\big(\widetilde{Z}\cdot F\big)_{O}dvdu=\\
=\frac{1}{4}\int\limits_{1}^{\frac{3}{2}}\int\limits_{0}^{6-4u}v(u-1)dvdu+\frac{1}{4}\int\limits_{0}^{1}\int\limits_{3-u}^{\frac{7-3u}{2}}(15-5u-4v)(v+u-3)dvdu=\frac{1}{96}+\frac{7}{384}=\frac{11}{384},\quad\quad\quad\quad
\end{multline*}
because the~curve $\widetilde{R}$ intersects $F$ transversally, and every irreducible component of the~curve $\widetilde{Z}$ also intersects $F$ transversally.
Hence, if $O\in\widetilde{R}\cap\widetilde{Z}$, then $S(W_{\bullet, \bullet,\bullet}^{\widetilde{S},F};O)=\frac{337}{384}<1$.
Similar computations imply that $S(W_{\bullet, \bullet,\bullet}^{\widetilde{S},F};O)<\frac{337}{384}<1$ if $O\not\in\widetilde{R}$ or $O\not\in\widetilde{Z}$.
Thus, using \eqref{equation:Kento-point}, we see that $\beta(\mathbf{F})>0$.
\end{proof}

\begin{lemma}
\label{lemma:dP4-blow-up-conic}
Let $S$ be a~general surface in $|H_{\mathbb{P}^3}|$ such that $P\in S$.
Suppose $P\not\in E_{\mathbb{P}^3}$, $-K_S$ is ample, and $P$ is contained in a $(-1)$-curve $B\subset S$
such that $\pi(B)$ is a conic. Then $\beta(\mathbf{F})>0$.
\end{lemma}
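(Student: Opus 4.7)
The plan is to mimic the proof of Lemma~\ref{lemma:dP4-blow-up} and apply the inequality~\eqref{equation:Kento-point} after blowing up $P$, while carefully tracking the extra $(-1)$-curve $B$ through $P$. First, observe that on the degree-four del Pezzo surface $S$ the only $(-1)$-curve whose $\pi$-image is a conic is the strict transform of the unique conic through the five points $\pi(S)\cap C_5$, so $B\sim 2L-\mathbf{e}_1-\mathbf{e}_2-\mathbf{e}_3-\mathbf{e}_4-\mathbf{e}_5$ is uniquely determined. Moreover, the intersection-number computation $B\cdot R=0$ with $R=E_Q|_S\sim 5L-2\sum\mathbf{e}_i$ gives $B\cap R=\varnothing$, so $P\notin R$, and hence $\widetilde{d}(u)=0$ and $\widetilde{N}^\prime(u)|_F=0$ for every $u\in[0,\tau]$.

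Next, blow up $P$ via $f\colon\widetilde{S}\to S$ with exceptional divisor $F$. Since $P\in B$, the strict transform $\widetilde{B}$ is a $(-2)$-curve, so $\widetilde{S}$ is a weak del Pezzo surface of degree three. The new feature relative to Lemma~\ref{lemma:dP4-blow-up} is that, in the Zariski decomposition of $f^*(P(u)|_S)-vF$, the curve $\widetilde{B}$ enters the negative part strictly before the five curves $\widetilde{Z}_i$ do. Using $\widetilde{B}^2=-2$, $\widetilde{B}\cdot\widetilde{Z}_i=0$, and $\widetilde{B}\cdot F=1$, a direct computation yields three regimes for $u\in[0,1]$: the divisor is nef on $v\in[0,3-2u]$; the negative part equals $\frac{v-3+2u}{2}\widetilde{B}$ on $v\in[3-2u,3-u]$; and it equals $\frac{v-3+2u}{2}\widetilde{B}+(v-3+u)\sum_{i=1}^{5}\widetilde{Z}_{i}$ on $v\in[3-u,\widetilde{t}(u)]$, where $\widetilde{t}(u)=\frac{11-4u}{3}$. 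For $u\in[1,\frac{3}{2}]$ one obtains a completely analogous three-regime decomposition with breakpoints $3-2u$, $6-4u$, and $\widetilde{t}(u)=\frac{21-14u}{3}$.

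With these decompositions the integrals defining $S(W^{S}_{\bullet,\bullet};F)$ and $S(W^{\widetilde{S},F}_{\bullet,\bullet,\bullet};O)$ split into three pieces on each of the intervals $[0,1]$ and $[1,\frac{3}{2}]$, and since $\widetilde{d}(u)\equiv 0$ and $\widetilde{N}^\prime(u)|_F=0$, only the volume-type integrals survive in $S(W^{S}_{\bullet,\bullet};F)$. The points of $F$ at which $\widetilde{N}(u,v)|_F$ is supported are just $\widetilde{B}\cap F$ and the five points $\widetilde{Z}_i\cap F$, which are pairwise distinct for generic $C_5$. Carrying out the elementary but lengthy integrations regime by regime, one obtains explicit rational values and verifies that $S(W^{S}_{\bullet,\bullet};F)<2$ and $S(W^{\widetilde{S},F}_{\bullet,\bullet,\bullet};O)<1$ for every $O\in F$. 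Combined with $S_X(S)=\frac{23}{48}<1$ from Example~\ref{example:AZ-surfaces-HP3}, the inequality~\eqref{equation:Kento-point} then yields $\beta(\mathbf{F})>0$.

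The main obstacle is precisely this extra Zariski-decomposition regime produced by the $(-2)$-curve $\widetilde{B}$: its contribution must be tracked through all the integrations, and in particular it creates a new positive term in $F_O$ at the point $O=\widetilde{B}\cap F$ of the form $\frac{1}{4}\int_0^{3/2}\int_{3-2u}^{\widetilde{t}(u)}(\widetilde{P}(u,v)\cdot F)\cdot\frac{v-3+2u}{2}\,dvdu$; the nontrivial numerical check is that, despite this extra contribution, $S(W^{\widetilde{S},F}_{\bullet,\bullet,\bullet};O)$ remains strictly below $1$ at that critical point.
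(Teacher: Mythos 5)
Your proposal follows essentially the same route as the paper's proof: the same flag (blow up $P$ and use the exceptional curve $F$), the same identification $B\sim 2L-\mathbf{e}_1-\cdots-\mathbf{e}_5$, and exactly the same three-regime Zariski decompositions, with breakpoints $3-2u$, $3-u$, $\tfrac{11-4u}{3}$ on $[0,1]$ and $3-2u$, $6-4u$, $\tfrac{21-14u}{3}$ on $[1,\tfrac{3}{2}]$, where $\widetilde{B}$ enters the negative part first with coefficient $\tfrac{v+2u-3}{2}$ and $\widetilde{Z}$ enters afterwards. The one genuine difference is your observation that $B\cdot R=(2L-\sum_i\mathbf{e}_i)\cdot(5L-2\sum_i\mathbf{e}_i)=0$ for $R=E_Q\vert_S$; since $R$ is smooth for general $S$ and cannot contain $B$ as a component (otherwise $R-B\sim -K_S$ would meet $B$ and $R$ would be singular), this does force $B\cap R=\varnothing$ and hence $P\notin E_Q$. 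The paper instead computes both subcases, obtaining $S(W^S_{\bullet,\bullet};F)=\tfrac{523}{288}$ for $P\in E_Q$ and $\tfrac{65}{36}$ for $P\notin E_Q$, so your remark shows the first subcase is vacuous and legitimately removes the $\widetilde{d}(u)$ and $\widetilde{N}^\prime(u)\vert_F$ contributions. The only real shortfall is that you assert rather than perform the final integrations; since the entire quantitative content of the lemma consists of the inequalities $S(W^S_{\bullet,\bullet};F)<2$ and $S(W_{\bullet,\bullet,\bullet}^{\widetilde{S},F};O)<1$, these rational numbers must actually be produced. For the record, with your (correct) decompositions one gets $S(W^S_{\bullet,\bullet};F)=\tfrac{65}{36}$, and at the critical point $O=\widetilde{B}\cap F$ that you rightly flag one gets $S(W_{\bullet,\bullet,\bullet}^{\widetilde{S},F};O)=\tfrac{127}{144}$, with the values $\tfrac{235}{288}$ at $O\in\widetilde{Z}\cap F$ and $\tfrac{101}{128}$ at the remaining points of $F$; all are within the required bounds, so your argument closes once these computations are written out.
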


\begin{proof}
Let us use notations introduced in the~proof of Lemma~\ref{lemma:dP4-blow-up},
and let $\widetilde{B}$ be the~proper transform on the~surface $\widetilde{S}$ of the~curve $B$.
Observe that $\widetilde{B}$ and $\widetilde{Z}$ are disjoint, and $\widetilde{B}^2=-2$ on the~surface~$\widetilde{S}$.
Moreover, it follows from Example~\ref{example:AZ-surfaces-HP3} that $\tau=\frac{3}{2}$ and
$$
f^{*}\big(P(u)\big\vert_{S}\big)-vF \sim_{\mathbb{R}}\left\{\aligned
&\frac{2-u}{3}\widetilde{Z}+\frac{1+u}{3}\widetilde{B}+\frac{11-4u-3v}{3}F  \ \text{ for } 0\leqslant u\leqslant 1, \\
&\frac{3-2u}{3}\big(\widetilde{Z}+2\widetilde{B}\big)+\frac{21-14u-3v}{3}F\ \text{ for } 1\leqslant u\leqslant \frac{3}{2}.
\endaligned
\right.
$$
This gives
$$
\widetilde{t}(u)=\left\{\aligned
&\frac{11-4u}{3}\ \text{ for } 0\leqslant u\leqslant 1, \\
&\frac{21-14u}{3}\ \text{ for } 1\leqslant u\leqslant \frac{3}{2}.
\endaligned
\right.
$$
Furthermore, if $0\leqslant u\leqslant 1$, then
$$
\widetilde{P}(u,v)=
\left\{\aligned
&\frac{2-u}{3}\widetilde{Z}+\frac{1+u}{3}\widetilde{B}+\frac{11-4u-3v}{3}F  \ \text{ for } 0\leqslant v\leqslant 3-2u, \\
&\frac{2-u}{3}\widetilde{Z}+\frac{11-4u-3v}{6}\big(\widetilde{B}+2F\big)  \ \text{ for } 3-2u \leqslant v \leqslant 3-u,\\
&\frac{11-4u-3v}{6}\big(2L+\widetilde{B}+2F\big)   \ \text{ for } 3-u \leqslant v \leqslant \frac{11-4u}{3},
\endaligned
\right.
$$
and
$$
\widetilde{N}(u,v)=
\left\{\aligned
&0  \ \text{ for } 0\leqslant v\leqslant 3-2u, \\
&\frac{v+2u-3}{2}\widetilde{B}  \ \text{ for } 3-2u \leqslant v \leqslant 3-u,\\
&\frac{v+2u-3}{2}\widetilde{B}+(v+u-3)\widetilde{Z}  \ \text{ for } 3-u \leqslant v \leqslant \frac{11-4u}{3},
\endaligned
\right.
$$
which gives
$$
\big(\widetilde{P}(u,v)\big)^2=
\left\{\aligned
&u^2-v^2-8u+11  \ \text{ for } 0\leqslant v\leqslant 3-2u, \\
&\frac{31}{2}-14u-3v+3u^2-\frac{v^2}{2}+2vu  \ \text{ for } 3-2u \leqslant v \leqslant 3-u,\\
&\frac{(11-4u-3v)^2}{2}\ \text{ for } 3-u \leqslant v \leqslant \frac{11-4u}{3},
\endaligned
\right.
$$
and
$$
\widetilde{P}(u,v)\cdot F=
\left\{\aligned
&v  \ \text{ for } 0\leqslant v\leqslant 3-2u, \\
&\frac{3-2u+v}{2}  \ \text{ for } 3-2u \leqslant v \leqslant 3-u,\\
&\frac{33-12u-9v}{2}   \ \text{ for } 3-u \leqslant v \leqslant \frac{11-4u}{3}.
\endaligned
\right.
$$
Similarly, if $1\leqslant u\leqslant \frac{3}{2}$, then
$$
\widetilde{P}(u,v)=
\left\{\aligned
&\frac{3-2u}{3}\big(\widetilde{Z}+2\widetilde{B}\big)+\frac{21-14u-3v}{3}F \ \text{ for } 0\leqslant v\leqslant 3-2u, \\
&\frac{3-2u}{3}\widetilde{Z}+\frac{21-14u-3v}{6}\big(\widetilde{B}+2F\big) \ \text{ for } 3-2u \leqslant v \leqslant 6-4u,\\
&\frac{21-14u-3v}{6}\big(2\widetilde{Z}+\widetilde{B}+2F\big)     \ \text{ for } 6-4u \leqslant v \leqslant \frac{21-14u}{3},
\endaligned
\right.
$$
and
$$
\widetilde{N}(u,v)=
\left\{\aligned
&0  \ \text{ for } 0\leqslant v\leqslant 3-2u, \\
&\frac{v+2u-3}{2}\widetilde{B} \ \text{ for } 3-2u \leqslant v \leqslant 6-4u,\\
&\frac{v+2u-3}{2}\widetilde{B}+(v+4u-6)\widetilde{Z} \ \text{ for } 6-4u \leqslant v \leqslant \frac{21-14u}{3},
\endaligned
\right.
$$
which gives
$$
\big(\widetilde{P}(u,v)\big)^2=
\left\{\aligned
&(6-4u-v)(6-4u+v) \ \text{ for } 0\leqslant v\leqslant 3-2u, \\
&\frac{81}{2}-54u-3v+18u^2-\frac{v^2}{2}+2vu  \ \text{ for } 3-2u \leqslant v \leqslant 6-4u,\\
&\frac{(21-14u-3v)^2}{2}  \ \text{ for } 6-4u \leqslant v \leqslant \frac{21-14u}{3},
\endaligned
\right.
$$
and
$$
\widetilde{P}(u,v)\cdot F=
\left\{\aligned
&v  \ \text{ for } 0\leqslant v\leqslant 3-2u, \\
&\frac{3-2u+v}{2}  \ \text{ for } 3-2u \leqslant v \leqslant 6-4u,\\
&\frac{63-42u-9v}{2}  \ \text{ for } 6-4u \leqslant v \leqslant \frac{21-14u}{3}.
\endaligned
\right.
$$
Thus, as in the~proof of Lemma~\ref{lemma:dP4-blow-up}, we compute
$$
S\big(W^S_{\bullet,\bullet};F\big)=
\left\{\aligned
&\frac{523}{288}  \ \text{ if } P\in E_{Q}, \\
&\frac{65}{36}  \ \text{ if } P\not\in E_{Q},
\endaligned
\right.
$$
so that $S(W^S_{\bullet,\bullet};F)<2$. Similarly, if $O$ is a point in $F$, then
$$
S\big(W_{\bullet, \bullet,\bullet}^{\widetilde{S},F};O\big)=
\left\{\aligned
&\frac{257}{288}   \ \text{ if } O\in \widetilde{B}\cap\widetilde{R}, \\
&\frac{119}{144}   \ \text{ if } O\in \widetilde{Z}\cap\widetilde{R},\\
&\frac{127}{144}   \ \text{ if } O\in\widetilde{B}\ \text{and}\ O\not\in\widetilde{R}, \\
&\frac{235}{288}   \ \text{ if } O\in\widetilde{Z}\ \text{and}\ O\not\in\widetilde{R},\\
&\frac{307}{384}   \ \text{ if } O\not\in\widetilde{B}\cup\widetilde{Z}\ \text{and}\ O\in\widetilde{R}\\
&\frac{101}{128}   \ \text{ if } O\not\in\widetilde{B}\cup\widetilde{Z}\cup\widetilde{R}.
\endaligned
\right.
$$
Therefore, using \eqref{equation:Kento-point}, we see that $\beta(\mathbf{F})>0$.
\end{proof}

On the other hand, we have the following purely geometric result.

\begin{lemma}
\label{lemma:SP3-dP4}
Suppose that $P\not\in E_{\mathbb{P}^3}$.
Let $S$ be a general surface in $|H_{\mathbb{P}^3}|$ such that $S$ passes through~$P$.
Then $-K_{S}$ is ample.
Further, if $P$ is contained in a $(-1)$-curve $B\subset S$,
then $\pi(B)$ is a smooth conic.
\end{lemma}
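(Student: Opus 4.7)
The plan is to identify $S$ explicitly as a blown-up plane and then analyze its $(-1)$-curves.

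\textbf{Setup.} Since $P \notin E_{\mathbb{P}^3}$ and $\pi$ is an isomorphism away from $E_{\mathbb{P}^3}$, the surface $S$ is the proper transform of a general plane $\Pi \subset \mathbb{P}^3$ through the fixed point $\pi(P) \notin C_5$, and $\pi|_S$ is the blow up of $\Pi \cong \mathbb{P}^2$ at the points $\{p_1,\dots,p_5\} = \Pi \cap C_5$. By Bertini these five points are distinct for general $\Pi$, since the base locus $\{\pi(P)\}$ of planes through $\pi(P)$ is disjoint from $C_5$.

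\textbf{Ampleness of $-K_S$.} It suffices to show that for general $\Pi \ni \pi(P)$ no three of $p_1,\dots,p_5$ are collinear, which makes $S$ a smooth del Pezzo surface of degree $4$. Equivalently, $\Pi$ must not contain any trisecant of $C_5$. The trisecants form a $1$-dimensional family, and for each trisecant $L$ the planes through $\pi(P)$ containing $L$ form either a single point (if $\pi(P) \notin L$) or a $\mathbb{P}^1$ (if $\pi(P) \in L$) inside the $\mathbb{P}^2$ of planes through $\pi(P)$. The trisecants through $\pi(P)$ correspond to triple points of the plane quintic obtained by projecting $C_5$ from $\pi(P)$; this projection is birational because $C_5$ lies on a unique smooth quadric (by the Sarkisov link~\eqref{equation:diagram}), so in particular not on any quadric cone with vertex $\pi(P)$. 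Hence only finitely many trisecants pass through $\pi(P)$, the total bad locus is at most $1$-dimensional in the $\mathbb{P}^2$ of planes through $\pi(P)$, and a general $\Pi$ avoids it.

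\textbf{Classification of $(-1)$-curves through $P$.} The sixteen $(-1)$-curves of the degree-$4$ del Pezzo $S$ are the exceptional divisors $\mathbf{e}_1,\dots,\mathbf{e}_5$ of $\pi|_S$, the ten proper transforms of the secant lines $\overline{p_ip_j}$, and the proper transform of the unique conic $\mathcal{C} \subset \Pi$ through $p_1,\dots,p_5$, which is smooth because no three $p_i$ are collinear. Suppose $P \in B$ for some $(-1)$-curve $B$. The case $B = \mathbf{e}_i$ contradicts $P \notin E_{\mathbb{P}^3}$. If $B$ were the proper transform of $\overline{p_ip_j}$, then $\pi(P) \in \overline{p_ip_j}$, so $\Pi$ would contain a secant of $C_5$ through $\pi(P)$; the finitely many such secants (corresponding to double points of the same plane quintic, finite for the same reason as above) each sweep out a $\mathbb{P}^1$ of planes through $\pi(P)$, giving a $1$-dimensional bad locus avoided by general $\Pi$. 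Therefore $B$ must be the proper transform of $\mathcal{C}$, and $\pi(B) = \mathcal{C}$ is a smooth conic.

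\textbf{Main obstacle.} The main difficulty is justifying the finiteness of trisecants and secants of $C_5$ through the \emph{fixed} (not generic) point $\pi(P)$. This is controlled by the uniqueness and smoothness of the quadric through $C_5$, which precludes pathological configurations such as $C_5$ lying on a quadric cone with vertex $\pi(P)$ that would produce a $1$-parameter family of secants through a single point.
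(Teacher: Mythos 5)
Your proof follows essentially the same route as the paper: realize $S$ as the plane $\Pi=\pi(S)$ blown up at the five points of $\Pi\cap C_5$, rule out trisecants in a general plane through $\pi(P)$ to get ampleness, and then use the classification of $(-1)$-curves on a degree-four del Pezzo surface together with the finiteness of secants through $\pi(P)$ (via projection from $\pi(P)$) to force $\pi(B)$ to be the conic. The one flaw is your justification of the birationality of the projection $C_5\dasharrow\mathbb{P}^2$ from $\pi(P)$: the assertion that ``$C_5$ lies on a unique smooth quadric (by the Sarkisov link)'' is false --- the quadric $Q$ in the link \eqref{equation:diagram} is a threefold in $\mathbb{P}^4$ containing $C_5'$, and in fact the quintic elliptic curve $C_5\subset\mathbb{P}^3$ lies on \emph{no} quadric surface at all (a quintic of genus one cannot sit on a smooth quadric or a quadric cone, by the standard genus formulas). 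Fortunately the claim you need is immediate for a simpler reason, which is implicitly the paper's: since $\deg C_5=5$ is prime and $C_5$ is non-degenerate in $\mathbb{P}^3$, a projection from a point off the curve cannot factor through a multiple cover (a $5{:}1$ cover would have a line as image, forcing $C_5$ to be planar), so it is birational onto an irreducible singular plane quintic, which has only finitely many singular points; these account for all secants and trisecants through $\pi(P)$. With that substitution your argument is correct and matches the paper's.
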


\begin{proof}
The surface $\pi(S)$ is a general plane in $\mathbb{P}^3$ that contains the~point $\pi(P)$. Write
$$
\pi(S)\cap C_5=P_1\cup P_2\cup P_3\cup P_4\cup P_5,
$$
where $P_1$, $P_2$, $P_3$, $P_4$, $P_5$ are distinct points.
Then $\pi$ induces a birational morphism~\mbox{$\varpi\colon S\to \pi(S)$}, which is a blow up of the~intersection points $P_1$, $P_2$, $P_3$, $P_4$, $P_5$.
Thus, to prove that $-K_S$ is ample, we must show that at most two points among these five are contained in a line.

If three points among $P_1$, $P_2$, $P_3$, $P_4$, $P_5$ are contained in a line $\ell$,
it is a trisecant of the curve~$C_5$, the line $\ell$ is contained in $\pi(E_Q)$, and its proper transform on $X$
is a fiber of the projection $E_Q\to C_5^\prime$.
However, the planes containing $\pi(P)$ and a trisecant of the curve $C_5$ form a one-dimensional family.
Hence, a general plane in $\mathbb{P}^3$ that passes through $\pi(P)$ does not contain trisecants of the curve $C_5$, so that
at most two points among $P_1$, $P_2$, $P_3$, $P_4$, $P_5$ are contained in a line.
Thus, $-K_S$ is ample.

Now, we suppose that $P$ is contained in a $(-1)$-curve $B\subset S$.
If $\pi(B)$ is not a conic, it must be a secant of the curve $C_5$ that contains $\pi(P)$.
Let $\phi\colon\mathbb{P}^3\dasharrow\mathbb{P}^2$ be the linear projection from~$\pi(P)$.
Since $\pi(P)\not\in C_5$, $\phi$ induces a birational morphism $C_5\to\phi(C_5)$, and
$\phi(C_5)$ is a singular irreducible curve of degree $5$.
Moreover, if $\ell$ is a secant of the curve $C_5$ that contains $\pi(P)$, then $\phi(\ell)$ is a singular point of the curve $\phi(C_5)$.
Since this curve has finitely many singular points, we conclude that there are finitely many secants of the curve $C_5$ that passes through $\pi(P)$.
This shows that $\pi(S)$ does not contain secants of the curve $C_5$ that pass through $\pi(P)$,
because $\pi(S)$ is a general plane in $\mathbb{P}^3$ that contains the~point~$\pi(P)$.
So, we conclude that $\pi(B)$ must be a conic.
\end{proof}

Hence, applying Lemmas~\ref{lemma:E}, \ref{lemma:dP4-blow-up}, \ref{lemma:dP4-blow-up-conic}, \ref{lemma:SP3-dP4}, we obtain

\begin{corollary}
\label{corollary:Z-EP3}
If $\beta(\mathbf{F})\leqslant 0$, then $Z$ is a fiber of the~projection $E_{\mathbb{P}^3}\to C_5$.
\end{corollary}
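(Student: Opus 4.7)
The plan is to argue the contrapositive: supposing that $Z$ is not contained in any fiber of the ruling $E_{\mathbb{P}^3}\to C_5$, I will deduce $\beta(\mathbf{F})>0$. The case when $Z$ is a surface was already eliminated at the start of the argument via \cite{Fujita2016}, so only two sub-cases remain, namely $Z$ is an irreducible curve or $Z$ is a point.

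My primary case split is according to whether or not $Z$ lies inside the exceptional divisor $E_{\mathbb{P}^3}$. If $Z\subset E_{\mathbb{P}^3}$, then $Z$ cannot be a point (because any point of $E_{\mathbb{P}^3}$ sits on a fiber of the ruling, contrary to our assumption), so $Z$ is a curve; the non-containment hypothesis forces $\pi(Z)$ to be a genuine curve in $\mathbb{P}^3$ rather than a point, and Lemma~\ref{lemma:E} then gives $\beta(\mathbf{F})>0$ directly.

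Otherwise $Z\not\subset E_{\mathbb{P}^3}$, and I can choose a point $P\in Z\setminus E_{\mathbb{P}^3}$ (in either sub-case, since $Z\cap E_{\mathbb{P}^3}$ is at most a finite set in the curve case). Lemma~\ref{lemma:SP3-dP4} then supplies a general surface $S\in|H_{\mathbb{P}^3}|$ through $P$ with $-K_S$ ample and with the extra property that any $(-1)$-curve in $S$ passing through $P$ has smooth conic image under $\pi$. A secondary split on the existence of such a $(-1)$-curve feeds into the two remaining lemmas: if no $(-1)$-curve of $S$ contains $P$, then Lemma~\ref{lemma:dP4-blow-up} gives $\beta(\mathbf{F})>0$; if some $(-1)$-curve $B\ni P$ exists in $S$, then $\pi(B)$ is a conic by Lemma~\ref{lemma:SP3-dP4}, and Lemma~\ref{lemma:dP4-blow-up-conic} gives $\beta(\mathbf{F})>0$.

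The only delicate point I anticipate is verifying that the ``general $S$'' hypothesis of the three surface-through-$P$ lemmas can be imposed simultaneously with the constraint $P\in S$. This is benign: the linear subsystem of $|H_{\mathbb{P}^3}|$ through $\pi(P)\notin C_5$ has base locus exactly $\{P\}$ on $X$, so the open conditions required (reducedness of $\pi(S)\cap C_5$, no trisecants of $C_5$ in the plane $\pi(S)$, smoothness of $E_Q\vert_S$ at $P$, conicity of $(-1)$-curves through $P$) all hold on a dense open subset of this subsystem. Granting this verification, the corollary is a clean combination of Lemmas~\ref{lemma:E}, \ref{lemma:dP4-blow-up}, \ref{lemma:dP4-blow-up-conic}, and \ref{lemma:SP3-dP4}.
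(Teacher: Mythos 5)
Your argument is exactly the paper's: the corollary is stated there with no proof beyond ``applying Lemmas~\ref{lemma:E}, \ref{lemma:dP4-blow-up}, \ref{lemma:dP4-blow-up-conic}, \ref{lemma:SP3-dP4}'', and your case split ($Z\subset E_{\mathbb{P}^3}$ versus $Z\not\subset E_{\mathbb{P}^3}$, then the secondary split on $(-1)$-curves through $P$, with Lemma~\ref{lemma:SP3-dP4} feeding into Lemmas~\ref{lemma:dP4-blow-up} and \ref{lemma:dP4-blow-up-conic}) is precisely the intended combination; your remark on imposing generality of $S$ simultaneously with $P\in S$ is also fine. One caveat: by taking the contrapositive of ``$Z$ is \emph{contained in} a fiber'' rather than ``$Z$ \emph{is} a fiber'', you only establish the weaker conclusion, and the residual case --- $Z$ a point of $E_{\mathbb{P}^3}$ (necessarily outside $E_Q$, by the standing assumption) --- is genuinely not covered by the four lemmas: Lemma~\ref{lemma:E} requires $Z$ to be a curve, while Lemmas~\ref{lemma:dP4-blow-up} and \ref{lemma:dP4-blow-up-conic} require $P\notin E_{\mathbb{P}^3}$. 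This lacuna is present in the paper's own one-line derivation as well; it is absorbed only by the computation following the corollary, where the Abban--Zhuang estimate for a general $S\in|H_Q|$ through a point $P\in E_{\mathbb{P}^3}\setminus E_Q$ applies verbatim when $Z=\{P\}$ and not only when $Z$ is a fiber. So your proof matches the paper's, but to obtain the corollary as literally stated you should either defer that point case to the subsequent computation in the same way or record it explicitly.
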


\begin{remark}
\label{remark:Z-EP3-1-3}
By \cite[Corollary~4.14]{Zhuang}, Corollary~\ref{corollary:Z-EP3} implies both Corollaries~\ref{corollary:1} and  Corollaries~\ref{corollary:3}.
\end{remark}

To complete the proof of the Main Theorem, we may assume $Z$ is a fiber of the~projection $E_{\mathbb{P}^3}\to C_5$.

Note that $Z\not\subset E_Q$, since $E_Q$ is irrational.
Thus, we can choose  $P\in Z$ such that $P\not\in E_Q$ either.
Now, let $S$ be a general surface in $|H_{Q}|$ such that $P\in S$.
Then the surface $q(S)$ is a general~hyperplane section of the~quadric $Q$ that contains $q(P)$,
so $q(S)\cong\mathbb{P}^1\times\mathbb{P}^1$ and
$q(S)\cap C_5^\prime=P_1\cup P_2\cup P_3\cup P_4\cup P_5$,
where $P_1$, $P_2$, $P_3$, $P_4$, $P_5$ are distinct points in $C_5^\prime$.
Then the~morphism $q\colon X\to Q$ induces a birational
morphism $S\to q(S)$ that blows up the~points $P_1$, $P_2$, $P_3$, $P_4$, $P_5$.

\begin{lemma}
\label{lemma:SP3-dP3}
The divisor $-K_{S}$ is ample.
\end{lemma}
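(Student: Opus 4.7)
The plan is to exhibit $S$ as a smooth del Pezzo surface of degree~$3$. Because $P\not\in E_Q$ one has $q(P)\not\in C_5^\prime$, and $q(P)$ is a smooth point of~$Q$, so a general hyperplane $H\subset\mathbb{P}^4$ through $q(P)$ cuts out a smooth quadric surface $q(S)\cong\mathbb{P}^1\times\mathbb{P}^1$, and the induced morphism $S\to q(S)$ is the blow-up of the five distinct points $P_1,\ldots,P_5\in q(S)\cap C_5^\prime$. Hence $K_S^2=8-5=3$ and $-K_S$ is big; to show it is ample it suffices to verify that $S$ carries no $(-2)$-curve.

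Writing an arbitrary effective class on $S$ as $aH_1+bH_2-\sum c_i e_i$ with $a,b,c_i\geqslant 0$, where $H_1$, $H_2$ are the pull-backs of the two rulings of $q(S)$, the conditions $K_S\cdot C=0$ and $C^2=-2$ become $\sum c_i=2(a+b)$ and $\sum c_i^2=2ab+2$; combined with the Cauchy--Schwarz inequality $(\sum c_i)^2\leqslant 5\sum c_i^2$ they force $(a,b)\in\{(1,0),(0,1),(1,1)\}$. So the only $(-2)$-classes that can be represented by an irreducible curve are $H_j-e_k-e_\ell$ for $j\in\{1,2\}$ (a line of one ruling through two of the $P_i$) and $H_1+H_2-e_{i_1}-e_{i_2}-e_{i_3}-e_{i_4}$ (a $(1,1)$-curve of $q(S)$, equivalently a plane conic in $H\cong\mathbb{P}^3$, through four of the $P_i$). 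It is enough to rule out both configurations for a general~$H$.

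For the $(1,1)$-curve case, I would show that no four points of the elliptic normal quintic $C_5^\prime\subset\mathbb{P}^4$ can be coplanar. If $P_{i_1},P_{i_2},P_{i_3},P_{i_4}$ all lay on a $2$-plane $\Pi\subset\mathbb{P}^4$, the pencil of hyperplanes through $\Pi$ would restrict, via the isomorphism $H^0(\mathbb{P}^4,\mathcal{O}(1))\xrightarrow{\sim}H^0(C_5^\prime,\mathcal{O}_{C_5^\prime}(1))$ supplied by the non-degeneracy of $C_5^\prime$, to a pencil of degree-$5$ divisors on $C_5^\prime$ whose base divisor has degree $\geqslant 4$; its moving part would then be a $g^1_d$ with $d\leqslant 1$ on the genus-one curve~$C_5^\prime$, which does not exist.

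For the remaining case, any line of $q(S)$ is a line in $\mathbb{P}^4$ contained in $Q\cap H$; if it meets $C_5^\prime$ in two points, it is a secant of $C_5^\prime$ contained in~$Q$. By the Sarkisov link~\eqref{equation:diagram}, such secants are precisely the $q$-images of fibers of the $\mathbb{P}^1$-bundle $E_Q\to C_5^\prime$ and therefore form a $1$-dimensional family. For each such secant $\ell$ (with $q(P)\not\in\ell$ generically), the hyperplanes containing both $\ell$ and $q(P)$ form a pencil in $\mathbb{P}^4$, so the locus of ``bad'' hyperplanes through $q(P)$ has dimension at most~$2$, whereas the linear system of hyperplanes through $q(P)$ is $3$-dimensional. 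A general $H$ therefore avoids every secant of $C_5^\prime$ lying on~$Q$, so $S$ contains no $(-2)$-curve and $-K_S$ is ample. The main obstacle is the $4$-secant argument for $C_5^\prime$; the dimension count here is routine given the structure of $E_Q$.
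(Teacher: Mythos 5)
Your argument is correct in substance but takes a genuinely different, and longer, route than the paper. The paper's proof is very short: by adjunction $-K_S\sim H_{\mathbb{P}^3}\vert_S$, which is nef and big, so $-K_S$ can fail to be ample only if $S$ contains an irreducible curve contracted by $\pi$; the only such curves on $X$ are the fibers of $E_{\mathbb{P}^3}\to C_5$, whose $q$-images are exactly the secant lines of $C_5^\prime$ lying on $Q$, and one concludes with the same dimension count you give at the end. In particular the $(1,1)$-configuration is excluded for free (an irreducible curve with $H_{\mathbb{P}^3}\cdot C=0$ must be a fiber, hence has $H_Q$-degree $1$, not $2$), so the paper never needs your coplanarity lemma. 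Your intrinsic root classification on the blow-up of $\mathbb{P}^1\times\mathbb{P}^1$ and your exclusion of four coplanar points on the elliptic normal quintic are both correct and self-contained, which is what the extra length buys. Two small caveats. First, to get ampleness via Nakai--Moishezon you must exclude not only $(-2)$-curves but also irreducible curves with $-K_S\cdot C<0$ (e.g.\ a ruling line through three of the $P_i$, of class $H_j-e_{i_1}-e_{i_2}-e_{i_3}$, or a $(1,1)$-curve through all five); your reduction ``it suffices to rule out $(-2)$-curves'' silently assumes $-K_S$ is nef. This is not an actual gap, because the only such classes realizable by irreducible curves are degenerations of the two configurations you already exclude (no four of the $P_i$ coplanar, hence no five; a ruling line through at least two of the $P_i$ is still a secant), but it should be said. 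Second, the secant lines of $C_5^\prime$ contained in $Q$ are the $q$-images of the fibers of $E_{\mathbb{P}^3}\to C_5$, not of $E_Q\to C_5^\prime$ --- the latter are contracted by $q$ to points of $C_5^\prime$; the paper's introduction contains the same transposition, and your conclusion that these secants form a one-dimensional family is unaffected.
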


\begin{proof}
Observe that $q(S)$ is a general hyperplane section of the quadric $Q$ that passes through $q(P)$,
and it follows from the adjunction formula that $-K_S\sim H_{\mathbb{P}^3}\big\vert_{S}$.
Thus, if $-K_S$ is not ample, $S$ contains a fiber $\ell$ of the~projection $E_{\mathbb{P}^3}\to C_5$
such that $q(\ell)$ is a secant line of the curve $C_5^\prime$.
On the other hand, hyperplane sections of $Q$ that contain $q(P)$ and a secant line of the curve $C_5^\prime$
form a two-dimensional family.
So, we may assume that $q(S)$ is not one of them, which implies that $-K_S$ is ample.
\end{proof}

Since $-K_{S}$ is ample, the~morphism $\pi$ induces an isomorphism $S\cong\pi(S)$, and $\pi(S)$ is a smooth cubic surface in $\mathbb{P}^3$
that contains the~curve $C_5$.
Let us identify $S$ with the~smooth cubic surface $\pi(S)$.
Using this identification, we see that $C_5=E_{\mathbb{P}^3}\cap S$.
Then $P\in C_5$, since $P\in E_{\mathbb{P}^3}$.

Let $L_1$ and $L_2$ be the~proper transforms on $S$ of two rulings of the~surface $q(S)\cong\mathbb{P}^1\times\mathbb{P}^1$
that pass through the~point $q(P)$. Then $L_1$ and $L_2$ are conics in $S$, because $q(L_1)$ and $q(L_2)$ do not contain
any of the~points $P_1$, $P_2$, $P_3$, $P_4$, $P_5$, since we assume that $q(S)$ is a general hyperplane section of the~quadric $Q$ that contains the~point $q(P)$.
Moreover, it follows from Example~\ref{example:AZ-surfaces-HPQ} that
$$
P(u)\big\vert_{S}\sim_{\mathbb{R}} \left\{\aligned
& -K_S+(1-u)(L_1+L_2) \ \text{ for } 0\leqslant u\leqslant 1, \\
& (4-3u)(-K_S)\ \text{ for } 1\leqslant u\leqslant \frac{4}{3},\\
\endaligned
\right.
$$
and
$$
N(u)\big\vert_{S}=\left\{\aligned
&0\ \text{ for } 0\leqslant u\leqslant 1, \\
&(u-1)C_5\ \text{ for } 1\leqslant u\leqslant \frac{4}{3},\\
\endaligned
\right.
$$
Let $T_P$ be the~unique curve in the~linear system $|-K_S|$ that is singular at $P$.
Then $T_P$ is cut out by the~hyperplane in $\mathbb{P}^3$ that is tangent to $S$ at the~point $P$.
In particular, the~curve $T_P$ is reduced.

\begin{lemma}
\label{lemma:Tp}
We have the~following five possible cases:
\begin{itemize}
\item $T_P$ is an irreducible cubic curve,
\item $T_P=\ell+C_2$, where $\ell$ is a line, $C_2$ is a smooth conic such that $C_2\ne L_1$ and $C_2\ne L_2$,
\item $T_P=\ell_1+\ell_2+\ell_3$, where $\ell_1$, $\ell_2$, $\ell_3$ are lines such that $P=\ell_1\cap\ell_2$ and $P\not\in\ell_3$.
\end{itemize}
\end{lemma}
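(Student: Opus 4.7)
The plan is to use the fact that $T_P$ is the reduced plane cubic cut out on the smooth cubic surface $S$ by its tangent hyperplane at $P$, classify reduced plane cubics with a singular point at $P$, and then exclude the degenerate cases via the genericity of~$S$.

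First, I would observe that since $T_P$ is reduced and $P\in\mathrm{Sing}(T_P)$, the multiplicity $\mathrm{mult}_P(T_P)$ is either $2$ or $3$. Decomposing $T_P$ into irreducible components of degrees summing to~$3$, the possibilities are: $T_P$ is irreducible (case (i)); or $T_P=\ell+C$ with $\ell$ a line and $C$ an irreducible conic, necessarily smooth; or $T_P$ is a union of three distinct lines. In the second possibility both $\ell$ and $C$ must contain $P$, otherwise $\mathrm{mult}_P(T_P)\le 1$; in the third, at least two of the three lines must contain $P$ for the same reason.

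Next, I would handle the line-plus-conic case. Suppose towards a contradiction that $C=L_1$ (the case $C=L_2$ is symmetric). Since the tangent plane $T_PS\subset\mathbb{P}^3$ contains the irreducible conic $L_1$, it must coincide with the plane $\Pi_{L_1}$ spanned by $L_1$; hence $T_P=L_1+\ell'$, where $\ell'$ is the residual line $\Pi_{L_1}\cap S-L_1$ on the cubic surface $S$. For $T_P$ to be singular at $P$ we would then need $\ell'\ni P$. I would argue that this is a proper closed condition on $S$ in the $3$-dimensional linear system of hyperplane sections of $Q$ through $q(P)$: as $S$ varies, the line $\ell'$ (which depends on $S$, since both $L_1$ and $\Pi_{L_1}$ do) sweeps out a proper subvariety of $\mathbb{P}^3$, and requiring $\ell'$ to meet the fixed point $P$ cuts out a proper closed subset of the parameter space. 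Exhibiting one cubic surface $\pi(S_0)$ through $C_5$ for which $\ell'\not\ni P$ would suffice to show this subset is proper, whence $C\neq L_1$ for general~$S$.

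Finally, in the three-lines case $T_P=\ell_1+\ell_2+\ell_3$, relabelling I may assume $P\in\ell_1\cap\ell_2$. I need to rule out $P\in\ell_3$, i.e., $P$ being an Eckardt point of the smooth cubic surface $\pi(S)$. The locus of cubic surfaces containing~$C_5$ for which the fixed point $P\in C_5$ is an Eckardt point is a proper closed subvariety of our parameter space, so for general $S$ this does not occur. Combining these three steps yields the desired trichotomy with the stated refinements.

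\textbf{Main obstacle.} The essential difficulty is justifying the genericity claims: namely, that the conditions ``the residual line $\ell'$ of $L_1$ on $\pi(S)$ passes through $P$'' and ``$P$ is an Eckardt point of $\pi(S)$'' define proper closed subsets of the $3$-dimensional family of hyperplane sections of~$Q$ through~$q(P)$. The cleanest way is to exhibit concrete cubic surfaces through $C_5$ and $P$ for which these conditions fail; one may invoke, for instance, Example~\ref{example:1} or Example~\ref{example:2} together with a deformation argument to confirm that the exceptional loci are proper.
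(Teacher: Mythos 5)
Your initial classification (reduced cubic singular at $P$, hence irreducible, or line plus smooth conic both through $P$, or three distinct lines at least two of which pass through $P$) matches the paper's list of six a priori cases. But the actual content of the lemma is the exclusion of the two degenerate configurations, and there you have a genuine gap: both exclusions are reduced to genericity claims that you assert but do not prove, and which you yourself flag as the main obstacle. The suggested way to close them --- invoking Example~\ref{example:1} or Example~\ref{example:2} plus a deformation argument --- does not apply here: those examples vary the curve $C_5^\prime$, whereas in this lemma the curve and the point $P$ are fixed and only the hyperplane section $S\in|H_Q|$ through $q(P)$ varies, so they say nothing about the properness of your exceptional loci. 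As written, the proposal does not establish either exclusion.

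Moreover, the paper's proof shows that no genericity argument is needed at either step; both exclusions are unconditional consequences of intersection theory on the link \eqref{equation:diagram}, using crucially that $L_1$ and $L_2$ are conics passing through $P$. For the Eckardt configuration $T_P=\ell_1+\ell_2+\ell_3$ with $P=\ell_1\cap\ell_2\cap\ell_3$: since $P\in L_1$ and each $\ell_i$ passes through $P$, one gets $2=-K_S\cdot L_1=(\ell_1+\ell_2+\ell_3)\cdot L_1\geqslant\sum_i(\ell_i\cdot L_1)_P\geqslant 3$, a contradiction --- this case is impossible for every admissible $S$, not just a general one. For $T_P=\ell+L_1$: the residual line $\ell\sim -K_S-L_1$ satisfies $E_Q\cdot\ell\geqslant 5$ (its image $q(\ell)$ is a twisted cubic through all five points of $q(S)\cap C_5^\prime$), whence $E_{\mathbb{P}^3}\cdot\ell=(5H_Q-3E_Q)\cdot\ell\leqslant 15-15=0$; but $P\in E_{\mathbb{P}^3}\cap\ell$ and $\ell\not\subset E_{\mathbb{P}^3}$ (the only rational curves on the irrational ruled surface $E_{\mathbb{P}^3}$ are fibers, which $q$ maps to lines), so $E_{\mathbb{P}^3}\cdot\ell\geqslant 1$, again a contradiction. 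This second argument is exactly the point where the hypothesis $P\in E_{\mathbb{P}^3}$ (i.e.\ $P\in C_5$) enters, and your plan makes no use of it. To salvage your route you would have to actually exhibit, for the given $C_5$ and $P$, explicit hyperplane sections avoiding each bad condition, which is considerably harder than the direct numerical arguments above.
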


\begin{proof}
A priori, since $T_P$ is a reduced cubic curve, we may have the~following cases:
\begin{enumerate}
\item $T_P$ is an irreducible curve,
\item $T_P=\ell+L_1$, where $\ell$ is a line,
\item $T_P=\ell+L_2$, where $\ell$ is a line,
\item $T_P=\ell+C_2$, where $\ell$ is a line, $C_2$ is a smooth conic such that $C_2\ne L_1$ and $C_2\ne L_2$,
\item $T_P=\ell_1+\ell_2+\ell_3$, where $\ell_1$, $\ell_2$, $\ell_3$ are lines such that $P=\ell_1\cap\ell_2$ and $P\not\in\ell_3$,
\item $T_P=\ell_1+\ell_2+\ell_3$, where $\ell_1$, $\ell_2$, $\ell_3$ are lines such that $P=\ell_1\cap\ell_2\cap\ell_3$.
\end{enumerate}
If $T_P=\ell_1+\ell_2+\ell_3$ for three lines $\ell_1$, $\ell_2$, $\ell_3$ such that $P=\ell_1\cap\ell_2\cap\ell_3$, then
$$
2=-K_S\cdot L_1=\big(\ell_1+\ell_2+\ell_3\big)\cdot L_1\geqslant \sum_{i=1}^3\big(\ell_1\cdot L_1\big)_P\geqslant 3,
$$
which is absurd. Thus, we see that the~last case is impossible. To complete the~proof, we must show that the~second and the~third cases are also impossible.

Suppose that $T_P=\ell+L_1$ for some line $\ell$.
Then $q(\ell)$ is a twisted cubic curve in $Q$ that contains all intersection points $P_1$, $P_2$, $P_3$, $P_4$, $P_5$.
In particular, we see that $E_Q\cap \ell\geqslant 5$.
On the other hand, the~curve $q(\ell)$ is not contained in the~surface $q(E_{\mathbb{P}^3})$, because the~only rational curves in the ruled irrational surface $E_{\mathbb{P}^3}$
are fibers of the~natural projection $E_{\mathbb{P}^3}\to C_5$, which are mapped to lines by $q$.
Therefore, since $P\in E_{\mathbb{P}^3}$ by assumption, we have
$$
1\leqslant\big(E_{\mathbb{P}^3}\cdot\ell\big)_{P}\leqslant E_{\mathbb{P}^3}\cdot\ell=\big(5H_Q-3E_Q\big)\cdot\ell=15-3E_Q\cdot\ell\leqslant 0,
$$
which is absurd. This shows that the~conic $L_1$ cannot be an irreducible component of the curve $T_P$.
Similarly, we see $L_2$ is also not an irreducible component of the curve $T_P$.
\end{proof}

From Example~\ref{example:AZ-surfaces-HPQ}, we know that $S_X(S)<1$.
So, it follows from \eqref{equation:Kento-point} that $\beta(\mathbf{F})\leqslant 0$ if $S(W^S_{\bullet,\bullet};F)<2$,
and $S(W_{\bullet, \bullet,\bullet}^{\widetilde{S},F};O)<1$ for every point $O\in F$.
Let us check these conditions.

Let $\widetilde{L}_1$, $\widetilde{L}_2$, $\widetilde{T}_P$ be proper transforms on $\widetilde{S}$ of the~curves $L_1$, $L_2$, $T_P$.
Then
\begin{equation}
\label{equation:cubic-surface-blown-up}
f^{*}\big(P(u)\big\vert_{S}\big)-vF \sim_{\mathbb{R}}\left\{\aligned
&\widetilde{T}_P+(1-u)\big(\widetilde{L}_1+\widetilde{L}_2\big)+(4-2u-v)F \ \text{ for } 0\leqslant u\leqslant 1, \\
&(4-3u)\widetilde{T}_P+(8-6u-v)F\ \text{ for } 1\leqslant u\leqslant \frac{4}{3}.
\endaligned
\right.
\end{equation}
Hence, if $0\leqslant u\leqslant 1$, then $\widetilde{t}(u)=4-2u$.
Similarly, if $1\leqslant u\leqslant \frac{4}{3}$, then $\widetilde{t}(u)=8-6u$.
Moreover, since the curve $C_5=E_{\mathbb{P}^3}\cap S$ is smooth, we have
$\widetilde{d}(u)=0$
for $0\leqslant u\leqslant 1$,
and $\widetilde{d}(u)=u-1$ for $1\leqslant u\leqslant \frac{4}{3}$.
Finally, let $\widetilde{C}_5$ be the proper transform on $\widetilde{S}$ of the curve $C_5$. Then
$$
\widetilde{N}^\prime(u)=\left\{\aligned
&0  \ \text{ for } 0\leqslant u\leqslant 1, \\
&(u-1)\widetilde{C}_5 \ \text{ for } 1\leqslant u\leqslant \frac{4}{3}.
\endaligned
\right.
$$
Now, we can compute $S(W^S_{\bullet,\bullet};F)$ and $S(W_{\bullet, \bullet,\bullet}^{\widetilde{S},F};O)$ for every point $O\in F$.

\begin{lemma}
\label{lemma:cubic-surface-case-a}
If $T_P$ is irreducible, then $S(W^S_{\bullet,\bullet};F)<2$ and $S(W_{\bullet, \bullet,\bullet}^{\widetilde{S},F};O)<1$ for every point $O\in F$.
\end{lemma}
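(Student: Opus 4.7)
The plan is to apply \eqref{equation:Kento-point} by writing down the Zariski decomposition of $f^*(P(u)|_S)-vF$ on $\widetilde{S}$ starting from \eqref{equation:cubic-surface-blown-up}, and then evaluating $S(W^S_{\bullet,\bullet};F)$ and $S(W_{\bullet,\bullet,\bullet}^{\widetilde{S},F};O)$ by direct integration, following the template of the proof of Lemma~\ref{lemma:dP4-blow-up}. The first step is to pin down intersection theory on $\widetilde{S}$. Since $T_P$ is irreducible and cut out by the tangent plane to the cubic $S$ at $P$, one has $\mathrm{mult}_P(T_P)=2$, hence $\widetilde{T}_P^2=(-K_S)^2-4=-1$. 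Each ruling conic $L_i$ is smooth at $P$, so $\widetilde{L}_i^2=-1$. Using $L_i\cdot(-K_S)=2$ and $L_1\cdot L_2=1$ on $S$, we get $\widetilde{T}_P\cdot\widetilde{L}_i=2-1\cdot 2=0$ and $\widetilde{L}_1\cdot\widetilde{L}_2=1-1=0$, so $\widetilde{T}_P,\widetilde{L}_1,\widetilde{L}_2$ form a triple of pairwise disjoint $(-1)$-curves.

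With this in hand, the Zariski decomposition of $f^*(P(u)|_S)-vF$ splits into chambers. For $u\in[0,1]$, the divisor is nef on $[0,3-u]$; at $v=3-u$ the classes $\widetilde{L}_1,\widetilde{L}_2$ enter $\widetilde{N}(u,v)$ simultaneously (by symmetry, since $\widetilde{P}\cdot\widetilde{L}_i=3-u-v$), giving $\widetilde{P}(u,v)=\widetilde{T}_P+(4-2u-v)(\widetilde{L}_1+\widetilde{L}_2)+(4-2u-v)F$ on $[3-u,\tfrac{7}{2}-2u]$; at $v=\tfrac{7}{2}-2u$ the class $\widetilde{T}_P$ also enters, producing $\widetilde{P}(u,v)=(8-4u-2v)\widetilde{T}_P+(4-2u-v)(\widetilde{L}_1+\widetilde{L}_2)+(4-2u-v)F$ on $[\tfrac{7}{2}-2u,4-2u]$. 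For $u\in[1,\tfrac{4}{3}]$, only $\widetilde{T}_P$ and $F$ are relevant, with $\widetilde{T}_P$ entering $\widetilde{N}(u,v)$ at $v=\tfrac{12-9u}{2}$. In each chamber the quantities $\widetilde{P}(u,v)^2$ and $\widetilde{P}(u,v)\cdot F$ become elementary polynomials in $u,v$ (indeed $\widetilde{P}(u,v)\cdot F=v$ on the initial chambers), and integrating against $\widetilde{d}(u)$, which vanishes for $u\in[0,1]$ and equals $u-1$ for $u\in[1,\tfrac{4}{3}]$, yields $S(W^S_{\bullet,\bullet};F)<2$.

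For the point estimates I would first compute $\frac{1}{8}\int_0^{4/3}\!\int_0^{\widetilde{t}(u)}(\widetilde{P}(u,v)\cdot F)^2\,dv\,du$ once, since this contribution to $S(W_{\bullet,\bullet,\bullet}^{\widetilde{S},F};O)$ is independent of $O$. The remainder $F_O(W_{\bullet,\bullet,\bullet}^{\widetilde{S},F})$ depends only on where $O$ sits among finitely many points on $F$: the single point $\widetilde{C}_5\cap F$ (at which $\widetilde{N}'(u)$ concentrates), the two points $\widetilde{L}_i\cap F$, and the (at most two) points in $\widetilde{T}_P\cap F$. Since $L_i\cdot T_P=2$ is already realized at $P$ with local multiplicities $1\cdot 2$, the tangent to $L_i$ at $P$ is transverse to the tangent cone of $T_P$, so $\widetilde{L}_i\cap F$ is disjoint from $\widetilde{T}_P\cap F$; the analogous bookkeeping, together with generality of $S$, shows that $\widetilde{C}_5\cap F$ avoids the other relevant points. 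For each resulting configuration I would read off $\mathrm{ord}_O(\widetilde{N}'(u)|_F+\widetilde{N}(u,v)|_F)$ from the chamber-wise description of $\widetilde{N}$ and verify $S(W_{\bullet,\bullet,\bullet}^{\widetilde{S},F};O)<1$.

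The main obstacle is getting the three-chamber Zariski decomposition for $u\in[0,1]$ correct: one has to confirm that $\widetilde{L}_1$ and $\widetilde{L}_2$ enter $\widetilde{N}(u,v)$ simultaneously and only later is joined by $\widetilde{T}_P$, so that the negative part remains supported on mutually orthogonal $(-1)$-curves throughout. Once $\widetilde{P}(u,v)$ and $\widetilde{N}(u,v)$ are tabulated chamber-by-chamber, both the global integral bound $S(W^S_{\bullet,\bullet};F)<2$ and the case-by-case bounds $S(W_{\bullet,\bullet,\bullet}^{\widetilde{S},F};O)<1$ reduce to elementary calculus on the model of Lemmas~\ref{lemma:dP4-blow-up} and \ref{lemma:dP4-blow-up-conic}.
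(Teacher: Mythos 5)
Your setup is right: the intersection numbers on $\widetilde{S}$ are correct, $\widetilde{T}_P,\widetilde{L}_1,\widetilde{L}_2$ are indeed pairwise disjoint $(-1)$-curves, and the overall strategy (chamber-by-chamber Zariski decomposition of $f^*(P(u)|_S)-vF$, then integration) is exactly the paper's. But the chamber structure you assert for $u\in[0,1]$ is wrong on half of that interval, and this is precisely the step you yourself single out as the main obstacle. From your own intersection numbers, $\bigl(f^*(P(u)|_S)-vF\bigr)\cdot\widetilde{L}_i=3-u-v$ while $\bigl(f^*(P(u)|_S)-vF\bigr)\cdot\widetilde{T}_P=7-4u-2v$, so $\widetilde{L}_1,\widetilde{L}_2$ go negative at $v=3-u$ and $\widetilde{T}_P$ at $v=\tfrac{7-4u}{2}$; these thresholds cross at $u=\tfrac12$. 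For $u\in(\tfrac12,1]$ it is $\widetilde{T}_P$ that enters $\widetilde{N}(u,v)$ first and is only later joined by $\widetilde{L}_1,\widetilde{L}_2$ --- the opposite of what you claim --- and your ``middle chamber'' $[3-u,\tfrac72-2u]$ has negative length there. The paper accordingly splits $[0,1]$ into $[0,\tfrac12]$ and $[\tfrac12,1]$, with genuinely different middle-chamber formulas (e.g.\ $\widetilde{P}(u,v)\cdot F=14-8u-3v$ instead of $6-2u-v$, and $\widetilde{P}(u,v)^2=18u^2+16uv+3v^2-68u-28v+62$). Since the resulting value $S(W^S_{\bullet,\bullet};F)=\tfrac{1103}{576}$ is close to the threshold $2$, this is not an error you can absorb into an inequality; the integrals must be redone with the correct chambers.

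A secondary gap: you dispose of the coincidences $O\in\widetilde{C}_5\cap\widetilde{T}_P$ and $O\in\widetilde{C}_5\cap\widetilde{L}_i$ by appealing to generality of $S$. The paper does not do this --- it computes $S(W_{\bullet,\bullet,\bullet}^{\widetilde{S},F};O)$ in all configurations, including these coincidences, and additionally distinguishes whether $T_P$ has a node or a cusp at $P$ (which changes $\widetilde{T}_P\cap F$ and hence $\mathrm{ord}_O$); the largest entries of its table occur exactly in the coincidence cases, e.g.\ $\tfrac{131}{144}$ for $O\in\widetilde{T}_P\cap\widetilde{C}_5$ with a node. If you want to exclude those cases you owe an actual argument that a general $S$ separates the tangent direction of $C_5$ at $P$ from those of $L_1$, $L_2$ and from the tangent cone of $T_P$; it is safer, and no harder, to carry the $\mathrm{ord}_O\bigl(\widetilde{N}'(u)|_F\bigr)$ term coming from $(u-1)\widetilde{C}_5$ through every case, as the paper does.
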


\begin{proof}
Suppose that the curve $T_P$ is irreducible. Then $\widetilde{S}$ is a smooth del Pezzo surface of degree $2$.
Note that $\widetilde{L}_1$, $\widetilde{L}_1$, $\widetilde{T}_P$ are disjoint $(-1)$-curves on $\widetilde{S}$.
If $0\leqslant u\leqslant\frac{1}{2}$, it follows from \eqref{equation:cubic-surface-blown-up} that
$$
\widetilde{P}(u,v)=
\left\{\aligned
&\widetilde{T}_P+(1-u)\big(\widetilde{L}_1+\widetilde{L}_2\big)+(4-2u-v)F\ \text{for}\ 0\leqslant v\leqslant 3-u,\\
&\widetilde{T}_P+(4-2u-v)\big(\widetilde{L}_1+\widetilde{L}_2+F\big)\ \text{for}\ 3-u\leqslant v\leqslant \frac{7-4u}{2},\\
&(4-2u-v)\big(2\widetilde{T}_P+\widetilde{L}_1+\widetilde{L}_2+F\big)\ \text{for}\ \frac{7-4u}{2}\leqslant v\leqslant4-2u,
\endaligned
\right.
$$
and
$$
\widetilde{N}(u,v)=
\left\{\aligned
&0\ \text{for}\ 0\leqslant v\leqslant3-u,\\
&(v+u-3)\big(\widetilde{L}_1+\widetilde{L}_2\big)\ \text{for}\ 3-u\leqslant v\leqslant\frac{7-4u}{2},\\
&(v+u-3)\big(\widetilde{L}_1+\widetilde{L}_2\big)+(2v+4u-7)\widetilde{T}_P\ \text{for}\ \frac{7-4u}{2}\leqslant v\leqslant 4-2u,
\endaligned
\right.
$$
which gives
$$
\big(\widetilde{P}(u,v)\big)^2=
\left\{\aligned
&2u^2-v^2-12u+13\ \text{for}\ 0\leqslant v\leqslant3-u,\\
&4u^2+4uv+v^2-24u-12v+31\ \text{for}\ 3-u\leqslant v\leqslant \frac{7-4u}{2},\\
&5(2u+v-4)^2\ \text{for}\ \frac{7-4u}{2}\leqslant v\leqslant 4-2u,
\endaligned
\right.
$$
and
$$
\widetilde{P}(u,v)\cdot F=
\left\{\aligned
&v\ \text{for}\ 0\leqslant v\leqslant3-u,\\
&6-2u-v\ \text{for}\ 3-u\leqslant v\leqslant \frac{7-4u}{2},\\
&20-10u-5v\ \text{for}\ \frac{7-4u}{2}\leqslant v\leqslant 4-2u.
\endaligned
\right.
$$
Likewise, if $\frac{1}{2}\leqslant u\leqslant 1$, then it follows from \eqref{equation:cubic-surface-blown-up} that
$$
\widetilde{P}(u,v)=
\left\{\aligned
&\widetilde{T}_P+(1-u)\big(\widetilde{L}_1+\widetilde{L}_2\big)+(4-2u-v)F\ \text{for}\ 0\leqslant v\leqslant\frac{7-4u}{2},\\
&(4-2u-v)\big(2\widetilde{T}_P+F\big)+(1-u)\big(\widetilde{L}_1+\widetilde{L}_2\big)\ \text{for}\ \frac{7-4u}{2}\leqslant v\leqslant 3-u,\\
&(4-2u-v)\big(2\widetilde{T}_P+\widetilde{L}_1+\widetilde{L}_2+F\big)\ \text{for}\ 3-u\leqslant v\leqslant 4-2u,
\endaligned
\right.
$$
and
$$
\widetilde{N}(u,v)=
\left\{\aligned
&0\ \text{for}\ 0\leqslant v\leqslant\frac{7-4u}{2},\\
&(2v+4u-7)\widetilde{T}_P\ \text{for}\ \frac{7-4u}{2}\leqslant v\leqslant3-u,\\
&(v+u-3)\big(\widetilde{L}_1)+\widetilde{L}_2\big)+(2v+4u-7)\widetilde{T}_P\ \text{for}\ 3-u\leqslant v\leqslant 4-2u,
\endaligned
\right.
$$
which gives
$$
\big(\widetilde{P}(u,v)\big)^2=
\left\{\aligned
&2u^2-v^2-12u+13\ \text{for}\ 0\leqslant v\leqslant\frac{7-4u}{2},\\
&18u^2+16uv+3v^2-68u-28v+62\ \text{for}\ \frac{7-4u}{2}\leqslant v\leqslant 3-u,\\
&5(2u+v-4)^2\ \text{for}\ 3-u\leqslant v\leqslant 4-2u,
\endaligned
\right.
$$
and
$$
\widetilde{P}(u,v)\cdot F=
\left\{\aligned
&v\ \text{for}\ 0\leqslant v\leqslant\frac{7-4u}{2},\\
&14-8u-3v\ \text{for}\ \frac{7-4u}{2}\leqslant v\leqslant 3-u,\\
&20-10u-5v\ \text{for}\ 3-u\leqslant v\leqslant 4-2u.
\endaligned
\right.
$$
Similarly, if $1\leqslant u\leqslant\frac{4}{3}$, then
$$
\widetilde{P}(u,v)=
\left\{\aligned
&(4-3u)\widetilde{T}_P+(8-6u-v)F\ \text{for}\ 0\leqslant v\leqslant\frac{12-9u}{2},\\
&(8-6u-v)(2\widetilde{T}_P+F)\ \text{for}\ \frac{12-9u}{2}\leqslant v\leqslant 8-6u,
\endaligned
\right.
$$
and
$$
\widetilde{N}(u,v)=
\left\{\aligned
&0\ \text{for}\ 0\leqslant v\leqslant\frac{12-9u}{2},\\
&(2v+9u-12)\widetilde{T}_P\ \text{for}\ \frac{12-9u}{2}\leqslant v\leqslant 8-6u,
\endaligned
\right.
$$
which gives
$$
\big(\widetilde{P}(u,v)\big)^2=
\left\{\aligned
&27u^2-v^2-72u+48\ \text{for}\ 0\leqslant v\leqslant\frac{12-9u}{2},\\
&3(6u+v-8)^2\ \text{for}\ \frac{12-9u}{2}\leqslant v\leqslant 8-6u,
\endaligned
\right.
$$
and
$$
\widetilde{P}(u,v)\cdot F=
\left\{\aligned
&v\ \text{for}\ 0\leqslant v\leqslant\frac{12-9u}{2},\\
&24-18u-3v\ \text{for}\ \frac{12-9u}{2}\leqslant v\leqslant 8-6u.
\endaligned
\right.
$$
As in the~proof of Lemma~\ref{lemma:dP4-blow-up}, we get $S(W^S_{\bullet,\bullet};F)=\frac{1103}{576}$.
Likewise, if $O$ is a point in $F$, then
$$
S\big(W_{\bullet, \bullet,\bullet}^{\widetilde{S},F};O\big)=
\left\{\aligned
&\frac{131}{144}   \ \text{ if } O\in \widetilde{T}_P,\ \text{$T_P$ has a node at $P$},\ \text{and}\ O\in\widetilde{C}_5,\\
&\frac{29}{32}   \ \text{ if } O\in \widetilde{T}_P,\ \text{$T_P$ has a node at $P$},\ \text{and}\ O\not\in\widetilde{C}_5,\\
&\frac{277}{288}   \ \text{ if } O\in \widetilde{T}_P,\ \text{$T_P$ has a cusp at $P$},\ \text{and}\ O\in\widetilde{C}_5,\\
&\frac{23}{24}   \ \text{ if } O\in \widetilde{T}_P,\ \text{$T_P$ has a cusp at $P$}\ \text{and}\ O\not\in\widetilde{C}_5,\\
&\frac{1045}{1152}   \ \text{ if } O\in \widetilde{L}_1\cup \widetilde{L}_2\ \text{and}\ O\in\widetilde{C}_5, \\
&\frac{347}{384}  \ \text{ if } O\in \widetilde{L}_1\cup \widetilde{L}_2\ \text{and}\ O\not\in\widetilde{C}_5, \\
&\frac{247}{288}  \ \text{ if } O\not\in\widetilde{L}_1\cup\widetilde{L}_2\cup\widetilde{T}_P\ \text{and}\ O\in\widetilde{C}_5,\\
&\frac{41}{48}   \ \text{ if } O\not\in\widetilde{L}_1\cup\widetilde{L}_2\cup\widetilde{T}_P\ \text{and}\ O\not\in\widetilde{C}_5.
\endaligned
\right.
$$
The lemma is proved.
\end{proof}

\begin{lemma}
\label{lemma:cubic-surface-case-b}
Suppose $T_P=\ell+C_2$, where $\ell$ is a line, $C_2$ is an smooth conic such that $L_1\ne C_2\ne L_2$.
Then $S(W^S_{\bullet,\bullet};F)<2$ and $S(W_{\bullet, \bullet,\bullet}^{\widetilde{S},F};O)<1$ for every point $O\in F$.
\end{lemma}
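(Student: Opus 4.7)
The plan is to mimic the calculation of Lemma~\ref{lemma:cubic-surface-case-a}, but to replace the anticanonical divisor class $\widetilde{T}_P$ by its reducible decomposition $\widetilde{\ell}+\widetilde{C}_2$ on $\widetilde{S}$ and redo the Zariski decomposition with this extra structure.

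First I would record the geometry on $\widetilde{S}$. Since $\ell$ is a line and $C_2$ is a smooth conic on the cubic surface $S$, we have $\ell^{2}=-1$ and $C_2^{2}=0$; from $\ell+C_2\sim -K_S$ we get $\ell\cdot C_2=2$, and since $\ell,C_2$ are smooth and meet at the node $P$ of $T_P$, they meet transversally there. On $\widetilde{S}$ this gives $\widetilde{\ell}^{\,2}=-2$, $\widetilde{C}_2^{\,2}=-1$ and $\widetilde{\ell}\cdot\widetilde{C}_2=1$. Because $L_i\cdot(-K_S)=2$ while each of $\ell,C_2,L_i$ contains $P$, the pairing $L_i\cdot\ell+L_i\cdot C_2=2$ together with the lower bounds $L_i\cdot\ell,L_i\cdot C_2\geqslant 1$ forces $L_i\cdot\ell=L_i\cdot C_2=1$ on $S$, hence $\widetilde{L}_i\cdot\widetilde{\ell}=\widetilde{L}_i\cdot\widetilde{C}_2=0$ on $\widetilde{S}$. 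Similarly $\widetilde{L}_1\cdot\widetilde{L}_2=0$, and each of $\widetilde{\ell},\widetilde{C}_2,\widetilde{L}_1,\widetilde{L}_2,\widetilde{C}_5$ meets $F$ transversally at one point.

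Starting from \eqref{equation:cubic-surface-blown-up} with $\widetilde{T}_P=\widetilde{\ell}+\widetilde{C}_2$, I would next work out the Zariski decomposition of $f^{*}(P(u)|_S)-vF$ piecewise in $v$. Intersecting the initial divisor with $\widetilde{\ell},\widetilde{L}_i,\widetilde{C}_2$ shows that as $v$ grows the $(-2)$-curve $\widetilde{\ell}$ becomes non-nef first (at $v=3-2u$ for $u\in[0,1]$), then the disjoint $(-1)$-curves $\widetilde{L}_1,\widetilde{L}_2$ together (at $v=3-u$), and finally $\widetilde{C}_2$ at the pseudo-effective threshold $v=\widetilde{t}(u)=4-2u$; the relative order of the last two breakpoints can swap as $u$ approaches~$1$, producing an extra sub-case that must be tracked. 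On each sub-interval I would peel off the appropriate nonnegative multiples of these curves to get $\widetilde{P}(u,v)$ as an explicit combination of $\widetilde{\ell},\widetilde{C}_2,\widetilde{L}_1,\widetilde{L}_2,F$, and then compute the polynomials $\bigl(\widetilde{P}(u,v)\bigr)^2$ and $\widetilde{P}(u,v)\cdot F$. A parallel analysis handles $u\in[1,\tfrac{4}{3}]$ by starting from the second branch of \eqref{equation:cubic-surface-blown-up}.

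Integrating these polynomials produces $S(W^S_{\bullet,\bullet};F)$, with two values depending on whether $P\in E_Q$ (this affects only the $\widetilde{d}(u)$-term, exactly as in Lemma~\ref{lemma:cubic-surface-case-a}), and for each $O\in F$ it produces $S(W^{\widetilde{S},F}_{\bullet,\bullet,\bullet};O)$. The cases for $O$ correspond to which of $\widetilde{\ell},\widetilde{C}_2,\widetilde{L}_1\cup\widetilde{L}_2,\widetilde{C}_5$ contain $O$; by the transversality noted above, $\mathrm{ord}_O(\widetilde{N}^{\prime}(u)|_F+\widetilde{N}(u,v)|_F)$ is simply the sum of the relevant coefficients of those curves in $\widetilde{N}(u,v)$ and $\widetilde{N}^{\prime}(u)$. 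Verifying $S(W^S_{\bullet,\bullet};F)<2$ and $S(W^{\widetilde{S},F}_{\bullet,\bullet,\bullet};O)<1$ in every case then yields $\beta(\mathbf{F})>0$ via \eqref{equation:Kento-point}. The main obstacle is precisely the piecewise bookkeeping: the $(-2)$-curve $\widetilde{\ell}$ introduces additional breakpoints in $(u,v)$-space compared with Lemma~\ref{lemma:cubic-surface-case-a}, and the order in which $\widetilde{\ell},\widetilde{L}_i,\widetilde{C}_2$ enter $\widetilde{N}(u,v)$ must be correctly determined on each sub-region before the integrals can be evaluated.
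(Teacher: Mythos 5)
Your overall strategy is the same as the paper's: restrict to the flag through $P$ and the exceptional curve $F$, write $\widetilde{T}_P=\widetilde{\ell}+\widetilde{C}_2$, track which negative curves enter $\widetilde{N}(u,v)$ as $v$ grows, and integrate piecewise. The intersection numbers you record ($\widetilde{\ell}^{\,2}=-2$, $\widetilde{C}_2^{\,2}=-1$, $\widetilde{\ell}\cdot\widetilde{C}_2=1$, $\widetilde{L}_i$ disjoint from $\widetilde{\ell}$ and $\widetilde{C}_2$) agree with the paper, as does your observation that the breakpoints $3-u$ and the $\widetilde{C}_2$-threshold swap order (the paper splits at $u=\tfrac{2}{3}$). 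However, there are two concrete problems in your setup. First, the claim that $\ell$ and $C_2$ meet transversally at $P$ ``since they are smooth'' is not a valid deduction: two smooth curves with $\ell\cdot C_2=2$ can be tangent at $P$, in which case the singularity of $T_P$ at $P$ is a tacnode rather than a node and $\widetilde{\ell}\cap\widetilde{C}_2$ is a point \emph{of $F$}. The paper explicitly allows this (``$\widetilde{\ell}\cap \widetilde{C}_2\in F$ $\iff$ $\ell$ and $C_2$ are tangent at $P$'') and the case $O\in\widetilde{\ell}\cap\widetilde{C}_2\cap\widetilde{C}_5$ is precisely the extremal one, giving $S(W_{\bullet,\bullet,\bullet}^{\widetilde{S},F};O)=\tfrac{1685}{1728}$, the value closest to $1$. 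Assuming transversality would silently delete this worst case from your analysis of points $O\in F$, so the resulting bound would not be justified.

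Second, your asserted order of breakpoints is wrong at the top end: $\widetilde{C}_2$ does not become non-nef only at the pseudo-effective threshold $v=4-2u$, but already at $v=\tfrac{11-6u}{3}<4-2u$ (for $0\leqslant u\leqslant\tfrac{2}{3}$; the analogous threshold for $1\leqslant u\leqslant\tfrac43$ is $v=\tfrac{20-15u}{2}<8-6u$). So there is one more $v$-subinterval than your plan anticipates, on which $\widetilde{N}(u,v)$ contains a positive multiple of $\widetilde{C}_2$ and $\widetilde{P}(u,v)$ is proportional to $2\widetilde{\ell}+3\widetilde{C}_2+\widetilde{L}_1+\widetilde{L}_2+F$. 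This would likely be caught when you actually perform the peeling-off, but as written the plan produces incorrect polynomials for $\bigl(\widetilde{P}(u,v)\bigr)^2$ and $\widetilde{P}(u,v)\cdot F$ on the final subinterval, and these feed directly into the integrals you need to show are less than $2$ and $1$ respectively (the paper's totals are $S(W^S_{\bullet,\bullet};F)=\tfrac{1661}{864}$ and the list of values for $S(W_{\bullet,\bullet,\bullet}^{\widetilde{S},F};O)$, all of which depend on getting every subinterval right).
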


\begin{proof}
Let $\widetilde{\ell}$ and $c$ be the proper transforms on the surface $\widetilde{S}$ of the curves $\ell$ and $C_2$, respectively.
Then $\widetilde{\ell}$ is a $(-2)$-curve, $\widetilde{C}_2$ is a $(-1)$-curve, and
the intersection $\widetilde{\ell}\cap \widetilde{C}_2$ consists of a single point.
Note also that $\widetilde{\ell}\cap \widetilde{C}_2\in F$ $\iff$  $\ell$ and $C_2$ are tangent at $P$.
If $0\leqslant u\leqslant\frac{2}{3}$, it follows from \eqref{equation:cubic-surface-blown-up} that
$$
\widetilde{P}(u,v)=
\left\{\aligned
&\widetilde{\ell}+\widetilde{C}_2+(1-u)\big(\widetilde{L}_1+\widetilde{L}_2\big)+(4-2u-v)F\ \text{for}\ 0\leqslant v\leqslant 3-2u,\\
&\frac{5-2u-v}{2}\widetilde{\ell}+\widetilde{C}_2+(1-u)\big(\widetilde{L}_1+\widetilde{L}_2\big)+(4-2u-v)F \ \text{for}\ 3-2u\leqslant v\leqslant 3-u,\\
&\frac{5-2u-v}{2}\widetilde{\ell}+\widetilde{C}_2+(4-2u-v)\big(\widetilde{L}_1+\widetilde{L}_2+F\big) \ \text{for}\ 3-u\leqslant v\leqslant \frac{11-6u}{3},\\
&(4-2u-v)\big(2\widetilde{\ell}+3\widetilde{C}_2+\widetilde{L}_1+\widetilde{L}_2+F\big) \ \text{for}\ \frac{11-6u}{3}\leqslant v\leqslant 4-2u,
\endaligned
\right.
$$
and
$$
\widetilde{N}(u,v)=
\left\{\aligned
&0\ \text{for}\ 0\leqslant v\leqslant 3-2u,\\
&\frac{v+2u-3}{2}\widetilde{\ell} \ \text{for}\ 3-2u\leqslant v\leqslant 3-u,\\
&\frac{v+2u-3}{2}\widetilde{\ell}+(v+u-3)\big(\widetilde{L}_1+\widetilde{L}_2\big) \ \text{for}\ 3-u\leqslant v\leqslant \frac{11-6u}{3},\\
&(4u+2v-7)\widetilde{\ell}+(6u+3v-11)\widetilde{C}_2+(v+u-3)\big(\widetilde{L}_1+\widetilde{L}_2\big) \ \text{for}\ \frac{11-6u}{3}\leqslant v\leqslant 4-2u,
\endaligned
\right.
$$
which gives
$$
\big(\widetilde{P}(u,v)\big)^2=
\left\{\aligned
&2u^2-v^2-12u+13\ \text{for}\ 0\leqslant v\leqslant 3-2u,\\
&\frac{35}{2}-18u-3v+4u^2-\frac{v^2}{2}+2uv \ \text{for}\ 3-2u\leqslant v\leqslant 3-u,\\
&\frac{71}{2}-30u-15v+6u^2+\frac{3v^2}{2}+6uv \ \text{for}\ 3-u\leqslant v\leqslant \frac{11-6u}{3},\\
&6(4-2u-v)^2 \ \text{for}\ \frac{11-6u}{3}\leqslant v\leqslant 4-2u,
\endaligned
\right.
$$
and
$$
\widetilde{P}(u,v)\cdot F=
\left\{\aligned
&v\ \text{for}\ 0\leqslant v\leqslant 3-2u,\\
&\frac{3-2u+v}{2} \ \text{for}\ 3-2u\leqslant v\leqslant 3-u,\\
&\frac{15-6u-3v}{2} \ \text{for}\ 3-u\leqslant v\leqslant \frac{11-6u}{3},\\
&24-12u-6v \ \text{for}\ \frac{11-6u}{3}\leqslant v\leqslant 4-2u,
\endaligned
\right.
$$
Likewise, if $\frac{2}{3}\leqslant u\leqslant 1$, then it follows from \eqref{equation:cubic-surface-blown-up} that
$$
\widetilde{P}(u,v)=
\left\{\aligned
&\widetilde{\ell}+\widetilde{C}_2+(1-u)\big(\widetilde{L}_1+\widetilde{L}_2\big)+(4-2u-v)F\ \text{for}\ 0\leqslant v\leqslant 3-2u,\\
&\frac{5-2u-v}{2}\widetilde{\ell}+\widetilde{C}_2+(1-u)\big(\widetilde{L}_1+\widetilde{L}_2\big)+(4-2u-v)F \ \text{for}\ 3-2u\leqslant v\leqslant \frac{11-6u}{3},\\
&(4-2u-v)\big(2\widetilde{\ell}+3\widetilde{C}_2+F\big)+(1-u)\big(\widetilde{L}_1+\widetilde{L}_2\big)\ \text{for}\ \frac{11-6u}{3}\leqslant v\leqslant 3-u,\\
&(4-2u-v)\big(2\widetilde{\ell}+3\widetilde{C}_2+\widetilde{L}_1+\widetilde{L}_2+F\big) \ \text{for}\ 3-u\leqslant v\leqslant 4-2u,
\endaligned
\right.
$$
and
$$
\widetilde{N}(u,v)=
\left\{\aligned
&0\ \text{for}\ 0\leqslant v\leqslant 3-2u,\\
&\frac{v+2u-3}{2}\widetilde{\ell} \ \text{for}\ 3-2u\leqslant v\leqslant \frac{11-6u}{3},\\
&(4u+2v-7)\widetilde{\ell}+(6u+3v-11)\widetilde{C}_2 \ \text{for}\ \frac{11-6u}{3}\leqslant v\leqslant 3-u,\\
&(4u+2v-7)\widetilde{\ell}+(6u+3v-11)\widetilde{C}_2+(v+u-3)\big(\widetilde{L}_1+\widetilde{L}_2\big) \ \text{for}\ 3-u\leqslant v\leqslant 4-2u,
\endaligned
\right.
$$
which gives
$$
\big(\widetilde{P}(u,v)\big)^2=
\left\{\aligned
&2u^2-v^2-12u+13\ \text{for}\ 0\leqslant v\leqslant 3-2u,\\
&\frac{35}{2}-18u-3v+4u^2-\frac{v^2}{2}+2uv \ \text{for}\ 3-2u\leqslant v\leqslant \frac{11-6u}{3},\\
&22u^2+20uv+4v^2-84u-36v+78 \ \text{for}\ \frac{11-6u}{3}\leqslant v\leqslant 3-u,\\
&6(4-2u-v)^2 \ \text{for}\ 3-u\leqslant v\leqslant 4-2u,
\endaligned
\right.
$$
and
$$
\widetilde{P}(u,v)\cdot F=
\left\{\aligned
&v\ \text{for}\ 0\leqslant v\leqslant 3-2u,\\
&\frac{3-2u+v}{2} \ \text{for}\ 3-2u\leqslant v\leqslant \frac{11-6u}{3},\\
&18-10u-4v+18 \ \text{for}\ \frac{11-6u}{3}\leqslant v\leqslant 3-u,\\
&24-12u-6v \ \text{for}\ 3-u\leqslant v\leqslant 4-2u,
\endaligned
\right.
$$
Similarly, if $1\leqslant u\leqslant\frac{4}{3}$, then
$$
\widetilde{P}(u,v)=
\left\{\aligned
&(4-3u)\big(\widetilde{\ell}+\widetilde{C}_2\big)+(8-6u-v)F\ \text{for}\ 0\leqslant v\leqslant 4-3u,\\
&\frac{12-9u-v}{2}\widetilde{\ell}+(4-3u)\widetilde{C}_2+(8-6u-v)F \ \text{for}\ 4-3u\leqslant v\leqslant\frac{20-15u}{2},\\
&(8-6u-v)\big(2\widetilde{\ell}+3\widetilde{C}_2+F\big) \ \text{for}\ \frac{20-15u}{2}\leqslant v\leqslant 8-6u,
\endaligned
\right.
$$
and
$$
\widetilde{N}(u,v)=
\left\{\aligned
&0\ \text{for}\ 0\leqslant v\leqslant 4-3u,\\
&\frac{v+3u-4}{2}\widetilde{\ell} \ \text{for}\ 4-3u\leqslant v\leqslant\frac{20-15u}{2},\\
&(9u+2v-12)\widetilde{\ell}+(15u+3v-20)\widetilde{C}_2\ \text{for}\ \frac{20-15u}{2}\leqslant v\leqslant 8-6u,
\endaligned
\right.
$$
which gives
$$
\big(\widetilde{P}(u,v)\big)^2=
\left\{\aligned
&27u^2-v^2-72u+48\ \text{for}\ 0\leqslant v\leqslant 4-3u,\\
&56-84u-4v+\frac{63u^2}{2}-\frac{v^2}{2}+3uv \ \text{for}\ 4-3u\leqslant v\leqslant\frac{20-15u}{2},\\
&4(6u+v-8)^2\ \text{for}\ \frac{20-15u}{2}\leqslant v\leqslant 8-6u,
\endaligned
\right.
$$
and
$$
\widetilde{P}(u,v)\cdot F=
\left\{\aligned
&v\ \text{for}\ 0\leqslant v\leqslant 4-3u,\\
&\frac{4-3u+v}{2} \ \text{for}\ 4-3u\leqslant v\leqslant\frac{20-15u}{2},\\
&32-24u-4v\ \text{for}\ \frac{20-15u}{2}\leqslant v\leqslant 8-6u,
\endaligned
\right.
$$
Now, we can compute
$$
S\big(W^S_{\bullet,\bullet};F\big)=\frac{1}{8}\int\limits_1^{\frac{4}{3}}(u-1)(27u^2-72u+48)du+\frac{1}{8}\int\limits_0^{\frac{4}{3}}\int\limits_0^{\widetilde{t}(u)}\big(\widetilde{P}(u,v)\big)^2dvdu=\frac{1661}{864}.
$$

Let $O$ be a point in $F$. If $O\in\widetilde{\ell}\cap\widetilde{C}_2\cap\widetilde{C}_5$, then $O\not\in\widetilde{L}_1\cup\widetilde{L}_2$, so that
\begin{multline*}
F_O\big(W_{\bullet,\bullet,\bullet}^{\widetilde{S},F}\big)=
\frac{1}{4}\int\limits_1^{\frac{4}{3}}\int\limits_0^{8-6u}(u-1)\big(\widetilde{P}(u,v)\cdot F\big)dvdu+
\frac{1}{4}\int\limits_0^{\frac{4}{3}}\int\limits_0^{\widetilde{t}(u)}\big(\widetilde{P}(u,v)\cdot F\big)\cdot \mathrm{ord}_O\big(\widetilde{N}(u,v)\big|_F\big)dvdu=\\
=\frac{1}{576}+\frac{1}{4}\int\limits_0^{\frac{2}{3}}\int\limits_{3-2u}^{\frac{11-6u}{3}}\big(\widetilde{P}(u,v)\cdot F\big)\frac{v+2u-3}{2}dvdu+\frac{1}{4}\int\limits_0^{\frac{2}{3}}\int\limits_{\frac{11-6u}{3}}^{4-2u}\big(\widetilde{P}(u,v)\cdot F\big)\big((6u+3v-11)+(4u+2v-7)\big)dvdu+\\
+\frac{1}{4}\int\limits_{\frac{2}{3}}^1\int\limits_{3-2u}^{\frac{11-6u}{3}}\big(\widetilde{P}(u,v)\cdot F\big)\frac{v+2u-3}{2}dvdu+\frac{1}{4}\int\limits_0^{\frac{2}{3}}\int\limits_{\frac{11-6u}{3}}^{4-2u}\big(\widetilde{P}(u,v)\cdot F\big)\big((6u+3v-11)+(4u+2v-7)\big)dvdu+\\
+\frac{1}{4}\int\limits_{1}^{\frac{4}{3}}\int\limits_{4-3u}^{\frac{20-15u}{3}}\big(\widetilde{P}(u,v)\cdot F\big)\frac{v+2u-3}{2}dvdu+\frac{1}{4}\int\limits_{1}^{\frac{4}{3}}\int\limits_{\frac{20-15u}{3}}^{8-6u}\big(\widetilde{P}(u,v)\cdot F\big)\big((6u+3v-11)+(4u+2v-7)\big)dvdu=\frac{235}{1728},
\end{multline*}
so that
$$
S\big(W_{\bullet, \bullet,\bullet}^{\widetilde{S},F};O\big)=\frac{1}{8}\int\limits_0^{\frac{4}{3}}\int\limits_0^{\widetilde{t}(u)}\big(\widetilde{P}(u,v)\cdot F\big)^2dvdu+\frac{235}{1728}=\frac{1685}{1728}.
$$
Similarly, if $O\in\widetilde{\ell}\cup\widetilde{C}_2$, then $S(W_{\bullet, \bullet,\bullet}^{\widetilde{S},F};O)\leqslant \frac{1685}{1728}$.
If $O\in\widetilde{L}_1\cup\widetilde{L}_2$, then $O\not\in\widetilde{\ell}\cup\widetilde{C}_2$,
and $O$ is contained in exactly one of the curves $\widetilde{L}_1$ or $\widetilde{L}_2$.
In this case, we have
$$
S\big(W_{\bullet, \bullet,\bullet}^{\widetilde{S},F};O\big)=
\left\{\aligned
&\frac{515}{576}   \ \text{ if } O\in\widetilde{C}_5,\\
&\frac{257}{288}   \ \text{ if } O\not\in\widetilde{C}_5.
\endaligned
\right.
$$
The lemma is proved.
\end{proof}

\begin{lemma}
\label{lemma:cubic-surface-case-e}
Suppose $T_P=\ell_1+\ell_2+\ell_3$, where $\ell_1$, $\ell_2$, $\ell_3$ are lines such that $P=\ell_1\cap\ell_2$ and $P\not\in\ell_3$.
Then $S(W^S_{\bullet,\bullet};F)<2$ and $S(W_{\bullet, \bullet,\bullet}^{\widetilde{S},F};O)<1$ for every point $O\in F$.
\end{lemma}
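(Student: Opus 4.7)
The plan is to follow the strategy of Lemmas~\ref{lemma:cubic-surface-case-a} and \ref{lemma:cubic-surface-case-b}: compute the Zariski decomposition of $f^{*}(P(u)\vert_S) - vF$ on $\widetilde{S}$ chamber by chamber using \eqref{equation:cubic-surface-blown-up}, then assemble the integrals $S(W^S_{\bullet,\bullet}; F)$ and $S(W^{\widetilde{S},F}_{\bullet, \bullet,\bullet}; O)$ and verify the bounds in \eqref{equation:Kento-point}.

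First I would pin down the intersection theory on $\widetilde{S}$. Since $\ell_1+\ell_2+\ell_3 = T_P \sim -K_S$ with each $\ell_i^2 = -1$, one gets $\ell_i\cdot \ell_j = 1$ for all $i\ne j$, so the three lines meet pairwise transversally in three distinct points. As $P = \ell_1\cap\ell_2$ lies on $\ell_1$ and $\ell_2$ but not on $\ell_3$, the proper transforms $\widetilde{\ell}_1,\widetilde{\ell}_2$ become disjoint $(-2)$-curves each meeting $F$ transversally at one point, while $\widetilde{\ell}_3$ remains a $(-1)$-curve disjoint from $F$ that meets each of $\widetilde{\ell}_1,\widetilde{\ell}_2$ transversally at one point. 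Writing $\widetilde{T}_P = \widetilde{\ell}_1 + \widetilde{\ell}_2 + \widetilde{\ell}_3$ in \eqref{equation:cubic-surface-blown-up}, I would then determine the critical values of $v$ at which $\widetilde{\ell}_1$ and $\widetilde{\ell}_2$ (simultaneously, by the symmetry that swaps them), $\widetilde{L}_1+\widetilde{L}_2$, and $\widetilde{\ell}_3$ successively enter the negative part, obtaining a piecewise description of $\widetilde{P}(u,v)$, $\widetilde{N}(u,v)$, $(\widetilde{P}(u,v))^2$, and $\widetilde{P}(u,v)\cdot F$ on each chamber.

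The main obstacle is the combinatorial expansion compared to the previous two lemmas. The symmetric pair of $(-2)$-curves $\widetilde{\ell}_1,\widetilde{\ell}_2$ enlarges the negative part two at a time, and once these have fully contracted the remaining positive part may force the $(-1)$-curve $\widetilde{\ell}_3$ into the negative part as well; both transitions have to be tracked in each of the ranges $0\leqslant u\leqslant 1$ and $1\leqslant u\leqslant \tfrac{4}{3}$, which produces noticeably more subintervals in $v$ than in the preceding cases. Once the chambers are enumerated, the remaining work reduces to routine polynomial integration: verify $S(W^S_{\bullet,\bullet}; F)<2$ directly, and compute $S(W^{\widetilde{S},F}_{\bullet, \bullet,\bullet}; O)$ via a case analysis on whether $O$ belongs to $\widetilde{\ell}_1\cup\widetilde{\ell}_2$, $\widetilde{L}_1\cup\widetilde{L}_2$, or $\widetilde{C}_5$, producing a table analogous to the one in Lemma~\ref{lemma:cubic-surface-case-b}. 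In every case the resulting values should lie strictly below $1$, so \eqref{equation:Kento-point} together with $S_X(S)<1$ from Example~\ref{example:AZ-surfaces-HPQ} yields $\beta(\mathbf{F})>0$.
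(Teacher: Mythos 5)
Your proposal sets up the geometry on $\widetilde{S}$ correctly ($\widetilde{\ell}_1,\widetilde{\ell}_2$ disjoint $(-2)$-curves meeting $F$, $\widetilde{\ell}_3$ a $(-1)$-curve disjoint from $F$) and takes essentially the same route as the paper: chamber-by-chamber Zariski decomposition of $f^{*}(P(u)\vert_S)-vF$ via \eqref{equation:cubic-surface-blown-up}, followed by the integrations and the case analysis on $O$. One small correction to your anticipated chamber structure: in the actual computation $\widetilde{\ell}_3$ never enters the negative part --- only $\widetilde{\ell}_1+\widetilde{\ell}_2$ and then $\widetilde{L}_1+\widetilde{L}_2$ do, and $(\widetilde{P}(u,v))^2$ vanishes at $v=\widetilde{t}(u)$ before $\widetilde{\ell}_3$ would need to be contracted --- so this case in fact has fewer subintervals than the preceding two lemmas, and the resulting values ($S(W^S_{\bullet,\bullet};F)=\tfrac{31}{16}$, etc.) confirm the claimed bounds.
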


\begin{proof}
Let $\widetilde{\ell}_1$, $\widetilde{\ell}_2$, $\widetilde{\ell}_3$ be the~proper transforms on $\widetilde{S}$ of the~lines $\ell_1$, $\ell_2$, $\ell_3$, respectively.
If $0\leqslant u\leqslant 1$, then it follows from \eqref{equation:cubic-surface-blown-up} that
$$
\widetilde{P}(u,v)=
\left\{\aligned
&\widetilde{\ell}_1+\widetilde{\ell}_2+\widetilde{\ell}_3+(1-u)\big(\widetilde{L}_1+\widetilde{L}_2\big)+(4-2u-v)F \text{ for } 0\leqslant v\leqslant 3-2u, \\
&\frac{5-2u-v}{2}\big(\widetilde{\ell}_1+\widetilde{\ell}_2\big)+\widetilde{\ell}_3+(1-u)\big(\widetilde{L}_1+\widetilde{L}_2\big)+(4-2u-v)F   \ \text{ for } 3-2u \leqslant v\leqslant 3-u,\\
&\frac{5-2u-v}{2}\big(\widetilde{\ell}_1+\widetilde{\ell}_2\big)+\widetilde{\ell}_3+(4-2u-v)\big(\widetilde{L}_1+\widetilde{L}_2+F\big)   \ \text{ for } 3-u \leqslant v\leqslant 4-2u,
\endaligned
\right.
$$
and
$$
\widetilde{N}(u,v)=
\left\{\aligned
&0 \text{ for } 0\leqslant v\leqslant 3-2u, \\
&\frac{v+2u-3}{2}\big(\widetilde{\ell}_1+\widetilde{\ell}_2\big) \ \text{ for } 3-2u \leqslant v\leqslant 3-u,\\
&\frac{v+2u-3}{2}\big(\widetilde{\ell}_1+\widetilde{\ell}_2\big)+(v+u-3)\big(\widetilde{L}_1+\widetilde{L}_2\big) \ \text{ for } 3-u \leqslant v\leqslant 4-2u,
\endaligned
\right.
$$
which gives
$$
\big(\widetilde{P}(u,v)\big)^2=
\left\{\aligned
&2u^2-v^2-12u+13 \text{ for } 0\leqslant v\leqslant 3-2u, \\
&6u^2+4uv-24u-6v+22 \ \text{ for } 3-2u \leqslant v\leqslant 3-u,\\
&8u^2+8uv+2v^2-36u-18v+40 \ \text{ for } 3-u \leqslant v\leqslant 4-2u,
\endaligned
\right.
$$
and
$$
\widetilde{P}(u,v)\cdot F=
\left\{\aligned
&v \text{ for } 0\leqslant v\leqslant 3-2u, \\
&3-2u\ \text{ for } 3-2u \leqslant v\leqslant 3-u,\\
&9-4u-2v \ \text{ for } 3-u \leqslant v\leqslant 4-2u.
\endaligned
\right.
$$
Similarly, if $1\leqslant u\leqslant \frac{3}{2}$, then
$$
\widetilde{P}(u,v)=
\left\{\aligned
&(4-3u)\big(\widetilde{\ell}_1+\widetilde{\ell}_2+\widetilde{\ell}_{3}\big)+(8-6u-v)F  \ \text{ for } 0\leqslant v\leqslant 4-3u, \\
&\frac{12-9u-v}{2}\big(\widetilde{\ell}_1+\widetilde{\ell}_2\big)+(4-3u)\widetilde{\ell}_3+(8-6u-v)F \ \text{ for } 4-3u\leqslant v \leqslant 8-6u,
\endaligned
\right.
$$
and
$$
\widetilde{N}(u,v)=
\left\{\aligned
&0\ \text{ for } 0\leqslant v\leqslant 4-3u, \\
&\frac{v+3u-4}{2}\big(\widetilde{\ell}_1+\widetilde{\ell}_2\big) \ \text{ for } 4-3u\leqslant v \leqslant 8-6u,
\endaligned
\right.
$$
which gives
$$
\big(\widetilde{P}(u,v)\big)^2=
\left\{\aligned
&27u^2-v^2-72u+48  \ \text{ for } 0\leqslant v\leqslant 4-3u, \\
&2(4-3u)(8-6u-v) \ \text{ for } 4-3u\leqslant v \leqslant 8-6u,
\endaligned
\right.
$$
and
$$
\widetilde{P}(u,v)\cdot F=
\left\{\aligned
&v\ \text{ for } 0\leqslant v\leqslant 4-3u, \\
&4-3u\ \text{ for } 4-3u\leqslant v \leqslant 8-6u.
\endaligned
\right.
$$
Since $P\in C_5$ and $C_5$ is smooth,
we compute $S(W^S_{\bullet,\bullet};F)=\frac{31}{16}$.
Similarly, if $O$ is a point in $F$, then
$$
S\big(W_{\bullet, \bullet,\bullet}^{\widetilde{S},F};O\big)=
\left\{\aligned
&\frac{329}{384}   \ \text{ if } O\in \widetilde{\ell}_1\cup\widetilde{\ell}_{2},\\
&\frac{161}{192}   \ \text{ if } O\in \widetilde{L}_1\cup \widetilde{L}_2, \\
&\frac{155}{192}   \ \text{ if } O\not\in\widetilde{\ell}_1\cup\widetilde{\ell}_{2}\cup\widetilde{L}_1\cup\widetilde{L}_2.
\endaligned
\right.
$$
The lemma is proved.
\end{proof}

Thus, we see that $S(W^S_{\bullet,\bullet};F)<2$ and $S(W_{\bullet, \bullet,\bullet}^{\widetilde{S},F};O)<1$ for every point $O\in F$.
Hence, using  \eqref{equation:Kento-point}, we conclude that $\beta(\mathbf{F})>0$. The Main Theorem is proved.

\medskip
\noindent
\textbf{Acknowledgements}
We started this project during the first author's visit to Krakow in 2022,
and we completed it during the first author's second visits to Krakow in 2023.
We would like to cordially thank Pedagogical University of Krakow for the hospitality and excellent working conditions.

Cheltsov has been supported by EPSRC grant \textnumero EP/V054597/1 and by the Leverhulme Trust grant RPG-2021-229.
Pokora has been supported by The Excellent Small Working Groups Programme DNWZ.711/IDUB/ESWG/2023/01/00002 at the Pedagogical University of Krakow.

\end{document}